\documentclass[12pt]{article}
\usepackage{amsmath,amssymb,amsfonts,amsthm}
\usepackage[textwidth=6.5in,textheight=9in,left=0.75in,right=0.75in,top=0.75in,bottom=0.75in]{geometry}
\usepackage{color}
\usepackage{verbatim}
\usepackage{epsfig,subfigure,epstopdf}
\usepackage[]{graphics}
\usepackage{graphicx,inputenc}
\usepackage{subfigure}
\usepackage[justification=centering]{caption}
\usepackage{pictex}
\usepackage{wrapfig}
\captionsetup[figure]{font=small,labelfont=small}
\captionsetup[figure]{font=footnotesize,labelfont=footnotesize}


\usepackage[T1]{fontenc}
\usepackage{authblk}

\usepackage[colorlinks,linktocpage,linkcolor=blue]{hyperref}
\usepackage{fancyhdr}

\newtheorem{definition}{Definition}[section]
\newtheorem{lemma}{Lemma}[section]
\newtheorem{theorem}{Theorem}[section]
\newtheorem{corollary}{Corollary}[section]

\newtheorem{remark}{Remark}[section]
\numberwithin{equation}{section}

\def\D{^C\!D_t^{\alpha}}
\def\Dtau{^C\!D_\tau^{\alpha}}
\def\N+{n\in\mathbb{N}^{+}}

\def\n{\partial{\overrightarrow{\bf n}}}

\begin{document}

\title{Recovering an Unknown Source in a Fractional Diffusion Problem}
\author[1]{William Rundell\thanks{rundell@math.tamu.edu}} 
\author[2]{Zhidong Zhang
\thanks{zhidong.zhang@helsinki.fi}}
\affil[1]{Department of Mathematics, Texas A\&M University, USA}
\affil[2]{Department of Mathematics and Statistics, University of Helsinki, Finland}

\maketitle

\begin{abstract}
A standard inverse problem is to determine a source which
is supported in an unknown domain $D$ from external boundary measurements.
Here we consider the case of a time-dependent situation where the source is
equal to unity in an unknown subdomain $D$ of a larger given domain $\Omega$.
Overposed measurements consist of time traces of the solution
or its flux values on a set of discrete points on the boundary
$\partial\Omega$.
The case of a parabolic equation was considered in \cite{HettlichRundell:2001}.
In our situation we extend this to cover the {\it subdiffusion\/} case
based on an anomalous diffusion model and leading to a fractional order
differential operator.
We will show a uniqueness result and examine a reconstruction algorithm.
One of the main motives for this work is to examine the dependence of the
reconstructions on the parameter $\alpha$, the exponent of the fractional
operator which controls the degree of anomalous behavior of the process.
Some previous inverse problems based on fractional diffusion models
have shown considerable differences between classical Brownian diffusion
and the anomalous case.
\end{abstract}

\section{Introduction}

Our aim is to recover the location and shape of an extended source function
$F = \chi(D)$ in a diffusion problem from making time-trace boundary
measurements,
\begin{equation}\label{FDE}
\begin{cases}
\begin{aligned}
\D u-\triangle u&=\chi_{{}_D}, &&(x,t)\in \Omega \times (0,T);\\
u(x,0)&=0, &&x\in \Omega; \\
u(x,t)&=0, &&(x,t)\in \partial\Omega\times(0,T).
\end{aligned}
\end{cases}
\end{equation}
$\Omega\subseteq \mathbb{R}^2$ is the unit disc, $\chi_{{}_D}$ is the
characteristic function on $D$ which is the source domain we need to
recover with $\overline{D}\subseteq \Omega$.
The overposed data is a time trace of the flux at a (small) finite number
$m$ of points located on the boundary $\partial D$, 
\begin{equation*}
\frac{\partial u}{\n}(z_\ell,t)=g_\ell(t),\ t\in[0,T], \  \ell=1,\dots, m. 
\end{equation*} 
In this paper, we restrict the set of admissible boundaries to be
star-like domains with respect to a point within $\Omega$, 
$$
\partial D=\{q(\theta)(\cos{\theta},\sin{\theta})^{\top}:\theta\in[0,2\pi]\}
$$
with a smooth, periodic function $0<q(\theta)<1$.
In equation~ \eqref{FDE}  $\D$ denotes the Djrbashian-Caputo fractional derivative of
order $\alpha$, $0<\alpha<1$ which will be defined in the next section.

We have described \eqref{FDE} in the simplest setting in
the sense we have taken the exterior boundary to be the unit circle
and have chosen homogeneous initial and boundary data.
This simplifies the exposition and, in particular, many of the representation
formulae.
Adding in nonhomogeneous initial/boundary conditions: $u(x,0)=u_0(x)$
and $u(x,t) = f(x,t)$ for $x$ on $\partial\Omega$ and sufficiently smooth $f$,
would be completely straightforward.
We could also have assumed a source of the form $a(t)\chi(D)$ where
$a(t) \in L^\infty(0,\infty)$ is known.
In each of these cases no technical issues would ensue or changes to
the main results.
Taking $\Omega$ to be a simply connected domain with $C^2$ boundary
$\partial\Omega$ is also possible in theory but we have used the specific
eigenfunction expansion for $-\triangle$ for a circle in both the uniqueness
result and the reconstruction algorithm.
The key change would be to equations
\eqref{eqn:theta_cond} and \eqref{eqn:m-theta_cond}
where the trigonometric function  would have to be replaced 
by the values of the Laplace eigenfunction for $\Omega$ evaluated on
$\partial\Omega$.
While these share the same properties when $\Omega$ is the unit circle,
this extension would require some further analysis.

The model \eqref{FDE} represents a so-called {\it anomalous diffusion\/}
process generalizing classical, Brownian  diffusion based on the heat equation.
This latter model can be viewed as a random walk
in which the dynamics are governed by an uncorrelated,
Markovian, Gaussian stochastic process.
The key assumption is that a change in the direction
of motion of a particle is random and that the mean-squared displacement
over many changes is proportional to time, i.e. $\langle x^2\rangle = C t$.
This easily leads to the derivation of the underlying differential equation
being the heat equation.
On the other hand, when the random walk involves correlations,
non-Gaussian statistics or a non-Markovian process
(for example, due to ``memory'' effects)
the classical  diffusion equation will fail to describe the macroscopic limit.
For example, if we replace the space-time correlation by 
$\langle x^2\rangle = C t^\alpha$ then
it can be shown that this
leads to a {\it subdiffusive\/} process and, importantly leads to a
tractable model where the partial differential equation is replaced by
the nonlocal equation \eqref{FDE}.

This paper is a generalisation of \cite{HettlichRundell:2001} where
the same problem was considered for the classical parabolic case, $\alpha=1$.
Our approach will be  the same, but here we must deal with the technical
issues of replacing the far simpler classical time derivative by the
nonlocal operator ${}_a^C\! D_t^\alpha$.
Thus while in the case $\alpha=1$ \eqref{FDE} is pointwise defined and
the Markovian property dictates that for any time step $t$ the solution can be
uniquely obtained from any single previous step $t-\delta t$, this is far
from the case if $\alpha<1$ where the complete time history of the function
$u$ has to be retained in the evolution.
In some previous cases involving fractional derivatives the inverse problem
has very different properties, especially with respect to degree of
ill-conditioning, from the classical case, see \cite{JinRundell:2015}
for an overview.
The poster child here is the backward diffusion problem.
This is severely ill-conditioned for the heat equation, but for $0<\alpha<1$
is only moderately so  (equal to a 2-derivative loss) ,
\cite{ChengNakagawaYamamotoYamazaki:2009}.
Thus an important aspect of our studies here is to determine, if any,
the differences made by the anomalous diffusion operator from that of
the classical one.
We will also investigate the influence of the number $m$ of measurement
points on both the question of uniqueness and reconstruction.

\section{Preliminary material}

\subsection{Fractional derivatives}
The (left-sided) fractional integral of order $\alpha$ is defined for
$f\in  L^1(a,b)$ by
\begin{equation}\label{eqn:RL-int}
({_aI_x^\alpha} f)(x)=\frac{1}{\Gamma(\alpha)}\int_a^x(x-s)^{\alpha-1}f(s)\,ds,
\end{equation}
and leads naturally to a fractional derivative in one of two ways.
The (left-sided) Riemann-Liouville fractional derivative of order $0<\alpha<1$,
is defined by
\begin{equation*}
{}_a^R\! D_t^\alpha  f(t) := 
\frac{1}{\Gamma(1-\alpha)}\frac{d\ }{dt}\int_a^t (t-s)^{-\alpha}f(s)\,ds,
\end{equation*}
and the (left-sided)  Djrbashian-Caputo fractional derivative of order $\alpha$ by
\begin{equation*}
{}_a^C\! D_t^\alpha  f(t) := 
  \frac{1}{\Gamma(1-\alpha)}\int_a^t(t-s)^{-\alpha}f'(s)\,ds.
\end{equation*}
In both cases note the specific dependence on the endpoint $a$.
Some references are 
\cite{Djrbashian:1966,Djrbashian:1993,
Caputo:1967,
SakamotoYamamoto:2011,
SamkoKilbasMarichev:1993}.

The Djrbashian-Caputo derivative is more restrictive than the Riemann-Liouville
since it requires the classical derivative to be absolutely integrable and
we implicitly assume that this condition holds.
Generally, the Riemann-Liouville and Djrbashian-Caputo derivatives are
different, even when both derivatives are defined, and we only have to consider
the constant function to see this.
Nonetheless, as we must expect, they are closely related to each other and
under the assumption that the function to which they are applied vanishes
at the starting point they are equal.
Thus in~\eqref{FDE} as stated we could have equally replaced 
${}_0^C\! D_t^\alpha$ by ${}_0^R\! D_t^\alpha$.
However, in the face of a non-homogeneous initial condition the regularity
of the solution of the direct problem for \eqref{FDE} would change.

\subsection{Mittag-Leffler function}
\par This function plays a central role in
fractional diffusion equations. It is a two-parameter function defined as 
\begin{equation*}
E_{\alpha,\beta}(z) = \sum_{k=0}^\infty \frac{z^k}{\Gamma(k\alpha+\beta)},
\ z\in \mathbb{C}.
\end{equation*}
The Mittag-Leffler function generalizes the exponential
function since $E_{1,1}(z)=e^z$ and as $\alpha\to 1$
the fractional diffusion process recovers classical diffusion as described
by the heat equation.
The following property will be used later.
The proof can be found in standard references, for example,
\cite[Lemma $3.2$]{SakamotoYamamoto:2011}.

\begin{lemma}\label{mittag_derivative}
	\par For $\lambda>0,\ \alpha>0$ and $\N+,$ we have 
	$$\frac{d^n}{dt^n}E_{\alpha,1}(-\lambda t^\alpha)
	=-\lambda t^{\alpha-n}E_{\alpha,\alpha-n+1}(-\lambda t^\alpha),
	\ t>0.$$
In particular, 
$\frac{d}{dt}E_{\alpha,1}(-\lambda t^\alpha) =
-\lambda t^{\alpha-1}E_{\alpha,\alpha}(-\lambda t^\alpha)$,
$\;t>0$.
\end{lemma}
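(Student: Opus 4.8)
The plan is to obtain the identity by differentiating the defining power series termwise and then re-indexing, the only nontrivial point being the justification of the termwise operation. First I would write
$$E_{\alpha,1}(-\lambda t^\alpha)=\sum_{k=0}^\infty\frac{(-\lambda)^k}{\Gamma(\alpha k+1)}\,t^{\alpha k},$$
observe that the $k=0$ term is the constant $1$ and hence disappears after a single differentiation, and recall the elementary rule $\frac{d^n}{dt^n}t^{\alpha k}=\frac{\Gamma(\alpha k+1)}{\Gamma(\alpha k-n+1)}\,t^{\alpha k-n}$, valid for every $t>0$ and every $k\ge 1$ under the usual convention that $1/\Gamma$ vanishes at the non-positive integers (which also correctly encodes $\frac{d^n}{dt^n}t^{\alpha k}=0$ in the degenerate case that $\alpha k$ is an integer strictly smaller than $n$).

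Second, substituting this into the series, the factors $\Gamma(\alpha k+1)$ cancel and one gets
$$\frac{d^n}{dt^n}E_{\alpha,1}(-\lambda t^\alpha)=\sum_{k=1}^\infty\frac{(-\lambda)^k}{\Gamma(\alpha k-n+1)}\,t^{\alpha k-n}.$$
Shifting $k\mapsto k+1$, pulling out the common factor $-\lambda\,t^{\alpha-n}$, and recognising the remaining sum $\sum_{k\ge 0}(-\lambda t^\alpha)^k/\Gamma(\alpha k+(\alpha-n+1))$ as $E_{\alpha,\alpha-n+1}(-\lambda t^\alpha)$ yields exactly the asserted formula; the displayed special case is just the instance $n=1$.

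The step I expect to be the main obstacle — really the only thing that needs care — is that the termwise differentiation above is legitimate. I would fix an arbitrary compact interval $[t_0,t_1]\subset(0,\infty)$ and check that, for each $j=1,\dots,n$, the $j$-times formally differentiated series converges uniformly on $[t_0,t_1]$: there $|t^{\alpha k-j}|\le t_0^{-j}\max(t_0,t_1)^{\alpha k}$, while by the standard asymptotics of the Gamma function $1/|\Gamma(\alpha k-j+1)|$ decays super-exponentially in $k$, so each such series is dominated by a convergent numerical series (which is, up to the constant $t_0^{-j}$, a value of the entire function $E_{\alpha,1-j}$). Uniform convergence of all the intermediate derivative series on compacta then allows one to apply the classical termwise-differentiation theorem iteratively for $j=1,\dots,n$, which simultaneously shows $t\mapsto E_{\alpha,1}(-\lambda t^\alpha)$ is $C^n$ on $(0,\infty)$ and justifies the computation. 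Alternatively one may simply invoke the known real-analyticity of $t\mapsto E_{\alpha,\beta}(-\lambda t^\alpha)$ on $(0,\infty)$ and differentiate under the summation sign directly.
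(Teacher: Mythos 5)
Your argument is correct and complete: the termwise differentiation, the cancellation of $\Gamma(\alpha k+1)$, the index shift, and the convention $1/\Gamma=0$ at the non-positive integers (which handles the degenerate terms and makes $E_{\alpha,\alpha-n+1}$ well defined even when $\alpha-n+1\le 0$) all check out, and the uniform-convergence justification on compact subsets of $(0,\infty)$ is exactly the point that needs care. The paper itself gives no proof of this lemma --- it simply cites \cite[Lemma 3.2]{SakamotoYamamoto:2011} --- and your series-manipulation argument is the standard proof found in that reference and the other cited literature, so there is nothing further to reconcile.
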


\subsection{The direct problem for equation \eqref{FDE}}
\par For the unit disc $\Omega,$ denote the  eigensystem of the Laplacian $-\triangle$ 
with the Dirichlet boundary condition by $\{(\lambda_n, \varphi_n(x)): \N+\}.$ Here, $\{\lambda_n:\N +\}$ is 
indexed by nondecreasing order and strictly positive, and  $\{\varphi_n(x):\N +\}$ constitutes an orthonormal basis in $L^2(\Omega).$ 
The polar representation of $\varphi_n$ is 
\begin{equation}\label{eigenfunction}
\varphi_n(r,\theta)=w_nJ_m(\sqrt{\lambda_n}r)\cos{(m\theta+\phi_n)}, 
\end{equation} 
where $m=m(n)$, the phase $\phi_n$ is either $0$ or $\pi/2$ and $w_n$ is the normalized weight factor.
Here $J_m(z)$ is the first kind Bessel function with degree $m$. 

With the above, \cite{SakamotoYamamoto:2011}
gives the following theorem for the direct problem of \eqref{FDE}.   
Here $H^k(\Omega)$ are the usual Sobolev spaces.

\begin{theorem}\label{direct}
	There exists a unique weak solution $u\in L^2(0,T;H^2(\Omega)\cap H_0^1(\Omega))$ of \eqref{FDE} with the representation 
	\begin{equation}\label{solution}
	u(x,t)=\sum_{n=1}^\infty\left(\int_0^t \int_{D} \varphi_n(y) (t-\tau)^{\alpha-1}
	E_{\alpha,\alpha}(-\lambda_n(t-\tau)^\alpha)\ {\rm d}y\ {\rm d}\tau \right)\varphi_n(x)
	\end{equation}
	and the regularity estimate
	\begin{equation*}
	\|u\|_{L^2(0,T;H^2(\Omega))}+\|\D\|_{L^2(\Omega\times(0,T))}\le C(T,D), 
	\end{equation*}
	where the notation $C(T,D)$ indicates the dependence on the final time $T$ and the domain $D$.
\end{theorem}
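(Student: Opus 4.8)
The plan is to diagonalise \eqref{FDE} in the Dirichlet eigenbasis $\{(\lambda_n,\varphi_n):\N+\}$ of $-\triangle$, thereby reducing it to a decoupled family of scalar fractional initial value problems that are solved explicitly by the Mittag-Leffler function. Expand the source as $\chi_{{}_D}(x)=\sum_{n=1}^\infty f_n\varphi_n(x)$ with $f_n=\int_D\varphi_n(y)\,{\rm d}y$, and seek $u(x,t)=\sum_{n=1}^\infty u_n(t)\varphi_n(x)$, where $u_n(t)=(u(\cdot,t),\varphi_n)_{L^2(\Omega)}$. Pairing \eqref{FDE} in $L^2(\Omega)$ with $\varphi_n$ and using $-\triangle\varphi_n=\lambda_n\varphi_n$ together with the self-adjointness of $-\triangle$ on $H^2(\Omega)\cap H_0^1(\Omega)$, one finds that $u_n$ must satisfy
\begin{equation*}
\D u_n(t)+\lambda_n u_n(t)=f_n,\quad t\in(0,T);\qquad u_n(0)=0 .
\end{equation*}

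The second step is to solve this scalar problem. Either by the Laplace transform (using $\L\!\left[t^{\alpha-1}E_{\alpha,\alpha}(-\lambda t^\alpha)\right](s)=(s^\alpha+\lambda)^{-1}$) or by direct substitution using Lemma \ref{mittag_derivative}, its unique solution is
\begin{equation*}
u_n(t)=f_n\int_0^t(t-\tau)^{\alpha-1}E_{\alpha,\alpha}\!\big(-\lambda_n(t-\tau)^\alpha\big)\,{\rm d}\tau=\frac{f_n}{\lambda_n}\Big(1-E_{\alpha,1}(-\lambda_n t^\alpha)\Big),
\end{equation*}
the last equality obtained by integrating the identity of Lemma \ref{mittag_derivative} over $[0,t]$. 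Since $f_n=\int_D\varphi_n(y)\,{\rm d}y$ is independent of $\tau$, reinserting $u_n$ into $u=\sum_n u_n\varphi_n$ reproduces exactly the representation \eqref{solution}.

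For convergence and the a priori estimate I would use the elementary bound $0\le E_{\alpha,1}(-x)\le 1$ for $x\ge0$ (complete monotonicity of $t\mapsto E_{\alpha,1}(-t)$). From the closed form this gives $\lambda_n|u_n(t)|\le|f_n|$, while from the scalar equation $\D u_n(t)=f_n-\lambda_n u_n(t)=f_nE_{\alpha,1}(-\lambda_n t^\alpha)$, so $|\D u_n(t)|\le|f_n|$, both uniformly in $t\in[0,T]$. As $\|v\|_{H^2(\Omega)}^2$ is equivalent to $\sum_n\lambda_n^2\,|(v,\varphi_n)_{L^2(\Omega)}|^2$ on $H^2(\Omega)\cap H_0^1(\Omega)$ by elliptic regularity, and $\sum_n|f_n|^2=\|\chi_{{}_D}\|_{L^2(\Omega)}^2<\infty$, Parseval's identity yields
\begin{equation*}
\|u(\cdot,t)\|_{H^2(\Omega)}^2\le C\sum_{n=1}^\infty\lambda_n^2|u_n(t)|^2\le C\|\chi_{{}_D}\|_{L^2(\Omega)}^2,\qquad \|\D u(\cdot,t)\|_{L^2(\Omega)}^2\le\|\chi_{{}_D}\|_{L^2(\Omega)}^2,
\end{equation*}
for every $t\in[0,T]$. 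Integrating in $t$ gives the stated estimate, with the dependence on $D$ entering only through $\|\chi_{{}_D}\|_{L^2(\Omega)}^2=|D|$ (and in fact $u\in L^\infty(0,T;H^2(\Omega)\cap H_0^1(\Omega))$). Uniqueness is immediate: the difference of two solutions has coefficients obeying the homogeneous scalar problem, whose only solution is $0$.

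The main obstacle is promoting these term-by-term identities to the genuine statement that the series \eqref{solution} defines a weak solution belonging to $L^2(0,T;H^2(\Omega)\cap H_0^1(\Omega))$ with $\D u\in L^2(\Omega\times(0,T))$. Because $\D$ acts nonlocally in time, one must justify interchanging it with the infinite sum, so that $\D u=\sum_{n}(\D u_n)\varphi_n$; the uniform-in-$t$ bounds on $\lambda_n u_n$ and $\D u_n$ established above are precisely what make the required dominated-convergence arguments work. A minor further technicality is the integrable singularity of the kernel $(t-\tau)^{\alpha-1}$ at $\tau=t$, which is harmless after the $\tau$-integration but must be tracked when differentiating under the integral sign. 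All of this is carried out in \cite{SakamotoYamamoto:2011}, on which Theorem \ref{direct} rests.
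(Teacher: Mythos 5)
Your proposal is correct and follows essentially the same route as the paper: the paper's entire proof is a citation of \cite[Theorem 2.2]{SakamotoYamamoto:2011}, and the eigenfunction decomposition, the scalar Mittag--Leffler solve, and the Parseval/elliptic-regularity estimates you write out are precisely the argument behind that citation, with the one genuinely delicate step (interchanging the nonlocal operator with the infinite sum) deferred, as you note, to the same reference. Your explicit derivation of the closed form $u_n(t)=\lambda_n^{-1}f_n\bigl(1-E_{\alpha,1}(-\lambda_n t^\alpha)\bigr)$ is a useful supplement, since the paper relies on exactly this identity later in \eqref{eqn:spectral_rep} and \eqref{u} without rederiving it.
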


\begin{proof}
This theorem is a specific case of \cite[Theorem 2.2]{SakamotoYamamoto:2011}
based on the fact that the source term is independent of $t$.
See a later remark about generalizing the situation in \eqref{FDE}
to include a known time-dependent factor in the source term.
\end{proof}	

\section{Main results} 

\par In this section we will prove the main theoretical result:
under suitable restrictions,  two observation points are sufficient to
determine the internal domain $D$ uniquely.

\subsection{Harmonic basis}
Let $\xi_m^{c,s}(r,\theta)=\frac{1}{\pi}r^m\{\cos{m\theta},\sin{m\theta}:m\in
\mathbb{N}\}$ denote the set of harmonic functions in $\Omega$.
With the given normalization it forms
a complete orthonormal basis in $L^2 (\partial\Omega)$.
First, we show that this basis can be used to gain a convergent approximation
to the flux data $\frac{\partial u}{\n}(z_\ell,t).$

Define the smooth approximation $\psi_\ell^M\in C^\infty(\overline{\Omega})$
of the delta distribution at $z_\ell$ as 
\begin{equation*}
\psi_\ell^M (x)=\sum_{m=1}^M \xi_m^c(z_\ell)\xi_m^c(x)+\xi_m^s(z_\ell)\xi_m^s(x),\ \ell=1,2,
\end{equation*} 
then the set $u_\ell^M$ are weak solutions of the FDEs
\begin{equation*}
\begin{cases}
\begin{aligned}
\D u_\ell^M -\triangle u_\ell^M&=0, &&(x,t)\in \Omega\times (0,T);\\
u_\ell^M&=0, &&(x,t)\in \partial \Omega\times (0,T);\\
u_\ell^M&=-\psi_\ell^M, &&(x,t)\in \Omega\times\{0\}.
\end{aligned}
\end{cases}
\end{equation*} 
It follows from \cite{SakamotoYamamoto:2011} that we have the 
regularity results 
$u_\ell^M \in C((0,T]; H^2(\Omega)\cap H_0^1(\Omega))$, 
$\D u_\ell^M \in C((0,T]; L^2(\Omega))$.

\begin{lemma}\label{wlM}
Define $w_\ell^M=u_\ell^M+\psi_\ell^M,$ then $w_\ell^M \in C((0,T]; H^2(\Omega)\cap H_0^1(\Omega)),$ 
$\D w_\ell^M \in C((0,T]; L^2(\Omega))$ 
and 
$$
\lim_{M\to \infty}\int_D w_\ell^M(x,t) {\rm d}x=-\frac{\partial u}{\n} (z_\ell),\quad
 \ell=1,2.
$$ 
\end{lemma}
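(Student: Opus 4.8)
The plan is to reduce the limit identity to an explicit Mittag--Leffler computation followed by one application of Green's formula. The regularity claims for $w_\ell^M$ require essentially no work: $\psi_\ell^M$ is a finite sum of harmonic polynomials, hence lies in $C^\infty(\overline\Omega)$, is independent of $t$, and satisfies $\D\psi_\ell^M=0$ and $\triangle\psi_\ell^M=0$; so the stated mapping properties of $w_\ell^M=u_\ell^M+\psi_\ell^M$ follow at once from the ones recorded above for $u_\ell^M$, with $\D w_\ell^M=\D u_\ell^M\in C((0,T];L^2(\Omega))$. For later use, note also that $w_\ell^M$ then solves $\D w-\triangle w=0$ with $w(\cdot,0)=0$ and Dirichlet data $\psi_\ell^M$ on $\partial\Omega$.

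For the identity I would first write everything spectrally in the Dirichlet eigenbasis $\{\varphi_n\}$. The representation theorem of \cite{SakamotoYamamoto:2011} for the homogeneous problem gives $u_\ell^M(x,t)=-\sum_n\langle\psi_\ell^M,\varphi_n\rangle E_{\alpha,1}(-\lambda_n t^\alpha)\varphi_n(x)$, so the $L^2(\Omega)$ Fourier coefficients of $w_\ell^M(\cdot,t)$ are $\langle w_\ell^M(\cdot,t),\varphi_n\rangle=\langle\psi_\ell^M,\varphi_n\rangle\bigl(1-E_{\alpha,1}(-\lambda_n t^\alpha)\bigr)$. On the other hand, performing the $\tau$-integral in \eqref{solution} with the aid of Lemma~\ref{mittag_derivative} yields
\[
u(x,t)=\sum_n\frac{\langle\chi_D,\varphi_n\rangle}{\lambda_n}\bigl(1-E_{\alpha,1}(-\lambda_n t^\alpha)\bigr)\varphi_n(x),\qquad u(\cdot,t)\in H^2(\Omega)\cap H_0^1(\Omega)\ \ (t\ge0),
\]
since $\sum_n\lambda_n^2\big|\lambda_n^{-1}\langle\chi_D,\varphi_n\rangle(1-E_{\alpha,1}(-\lambda_n t^\alpha))\big|^2\le\|\chi_D\|_{L^2(\Omega)}^2$. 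Now by Parseval, and using $-\triangle u(\cdot,t)=\sum_n\lambda_n\langle u(\cdot,t),\varphi_n\rangle\varphi_n$, both $\int_D w_\ell^M(x,t)\,dx=\langle w_\ell^M(\cdot,t),\chi_D\rangle_{L^2(\Omega)}$ and $\langle\psi_\ell^M,-\triangle u(\cdot,t)\rangle_{L^2(\Omega)}$ reduce to the same absolutely convergent numerical series $\sum_n\bigl(1-E_{\alpha,1}(-\lambda_n t^\alpha)\bigr)\langle\psi_\ell^M,\varphi_n\rangle\langle\chi_D,\varphi_n\rangle$ (absolute convergence by Cauchy--Schwarz, $\psi_\ell^M,\chi_D\in L^2(\Omega)$; note $0<1-E_{\alpha,1}(-\lambda_n t^\alpha)<1$ for $t>0$, so no vanishing denominators arise). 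Hence $\int_D w_\ell^M(x,t)\,dx=\langle\psi_\ell^M,-\triangle u(\cdot,t)\rangle_{L^2(\Omega)}$.

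Next I would apply Green's second identity: since $\psi_\ell^M$ is harmonic and $u(\cdot,t)\in H^2(\Omega)\cap H_0^1(\Omega)$ vanishes on $\partial\Omega$,
\[
\langle\psi_\ell^M,-\triangle u(\cdot,t)\rangle_{L^2(\Omega)}=-\int_{\partial\Omega}\psi_\ell^M\,\frac{\partial u}{\n}(\cdot,t)\,dS=-\sum_{m=1}^M\Bigl(\xi_m^c(z_\ell)\bigl\langle\tfrac{\partial u}{\n}(\cdot,t),\xi_m^c\bigr\rangle_{L^2(\partial\Omega)}+\xi_m^s(z_\ell)\bigl\langle\tfrac{\partial u}{\n}(\cdot,t),\xi_m^s\bigr\rangle_{L^2(\partial\Omega)}\Bigr),
\]
the last equality being just the definition of $\psi_\ell^M$. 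The right-hand side is precisely minus the $M$-th partial sum of the expansion of the Neumann trace $\frac{\partial u}{\n}(\cdot,t)\in L^2(\partial\Omega)$ in the complete orthonormal system $\{\xi_m^{c,s}\}$, evaluated at the boundary point $z_\ell$. Thus the lemma follows once this Fourier series is shown to converge pointwise at $z_\ell$ to $\frac{\partial u}{\n}(z_\ell,t)$.

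That pointwise convergence is the one genuinely technical point, and it is where the assumption $\overline D\subset\Omega$ is used. In a fixed neighborhood $\mathcal N$ of $\partial\Omega$ the source $\chi_D$ vanishes, so there $u(\cdot,t)$ satisfies the elliptic equation $\triangle u(\cdot,t)=\D u(\cdot,t)$; differentiating the series above and using the decay $|E_{\alpha,\alpha}(-\lambda_n t^\alpha)|\le C/(1+\lambda_n t^\alpha)$ one gains additional spatial regularity for $\partial_t u(\cdot,t)$, hence for $\D u(\cdot,t)$, on $\mathcal N$, and a standard elliptic bootstrap then shows that, for each fixed $t>0$, $u(\cdot,t)$ is smooth up to $\partial\Omega$; in particular $\frac{\partial u}{\n}(\cdot,t)\in H^s(\partial\Omega)$ for some $s>1/2$, and the Sobolev embedding on the circle makes its Fourier series converge uniformly. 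Letting $M\to\infty$ in the previous display therefore gives $\lim_{M\to\infty}\int_D w_\ell^M(x,t)\,dx=-\frac{\partial u}{\n}(z_\ell,t)$, for $\ell=1,2$ and each $t\in(0,T]$. Everything else is algebra, Parseval, and one classical integration by parts; I expect the regularity/pointwise-convergence step to be the only real obstacle, the remaining work being routine bookkeeping on the eigenfunction series involved.
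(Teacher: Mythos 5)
Your proof is correct, but it takes a genuinely different route from the paper's. The paper works through the parabolic weak formulation \eqref{equality_1}: it writes $\int_0^t\int_D w_\ell^M\,{\rm d}x\,{\rm d}\tau$ as a time convolution, substitutes $\chi_{{}_D}=\D u-\triangle u$, transfers the Caputo derivative from $u$ onto $w_\ell^M$ by commuting convolutions (legitimate because both functions have vanishing initial data), applies Green's \emph{first} formula, cancels the remaining volume terms against \eqref{equality_1}, and only at the end differentiates in $t$. You instead argue time-slice by time-slice: you expand $u(\cdot,t)$ and $w_\ell^M(\cdot,t)$ in the Dirichlet eigenbasis (your formulas agree with \eqref{u} and \eqref{eqn:spectral_rep}, which the paper derives independently of this lemma, so there is no circularity), use Parseval to identify $\int_D w_\ell^M(x,t)\,{\rm d}x$ with $\langle\psi_\ell^M,-\triangle u(\cdot,t)\rangle_{L^2(\Omega)}$, and then apply Green's \emph{second} identity once. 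Both arguments land on the same intermediate identity --- that $\int_D w_\ell^M(x,t)\,{\rm d}x$ equals minus the $M$-th partial Fourier sum of the flux $\frac{\partial u}{\n}(\cdot,t)$ evaluated at $z_\ell$ --- and both finish by invoking pointwise convergence of that Fourier series. What your route buys is the complete avoidance of the convolution manipulations of the nonlocal operator, and it makes transparent that the identity at each fixed $t$ is purely elliptic; the cost is reliance on the explicit spectral representations, which the paper's duality argument does not need and which would make the paper's version easier to adapt to, say, a time-dependent source factor. A further point in your favour: you actually justify the pointwise convergence of the Fourier series of the Neumann trace via elliptic regularity near $\partial\Omega$ (exploiting $\overline{D}\subset\Omega$), a step the paper simply asserts; note that a single bootstrap step, giving $\frac{\partial u}{\n}(\cdot,t)\in H^{s}(\partial\Omega)$ for some $s>1/2$ and hence uniform convergence of the circle Fourier series, already suffices, so the full ``smooth up to the boundary'' claim in your sketch is more than you need and need not be pushed through.
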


\begin{proof}
The regularity follows from those of $u_\ell^M$ and $\psi_\ell^M$. 
Since $\psi_\ell^M$ are linear combinations of harmonic functions,
they satisfy the equations $\D \psi_\ell^M -\triangle \psi_\ell^M=0, \ \ell=1,2$.
Hence, $w_\ell^M,\ \ell=1,2$ are weak solutions of
$\D w_\ell^M -\triangle w_\ell^M=0$, $(x,t)\in \Omega\times (0,T)$ 
subject to the boundary condition $w_\ell^M|_{\partial \Omega}=\psi_\ell^M$
and the initial condition $w_\ell^M(\cdot,0)=0$.
Then for each $v\in L^2(0,T;H_0^1(\Omega))$,  
\begin{equation}\label{equality_1}
\int_0^t \int_{\Omega}  (\Dtau w_\ell^M) v +
\nabla w_\ell^M\! \cdot\! \nabla v \,{\rm d}x\,{\rm d}\tau=0.
\end{equation}

A direct calculation gives
\begin{equation*}
\begin{aligned}
\int_0^t \int_D w_\ell^M(x,\tau)\ {\rm d}x\ {\rm d}\tau 
&=\int_0^t \int_D w_\ell^M(x,t-\tau)\ {\rm d}x\ {\rm d}\tau\\
&= \int_0^t \int_\Omega [\D u(x,\tau)-\triangle u(x,\tau)]w_\ell^M(x,t-\tau)\ {\rm d}x\ {\rm d}\tau\\
&:=I_1+I_2.
\end{aligned}
\end{equation*}
For $I_1,$ by the regularity of the functions $w_\ell^M$ and $u$,
it holds that 
\begin{equation*}
\begin{aligned}
I_1&=\int_0^t \int_\Omega {}\D u(x,\tau)w_\ell^M(x,t-\tau)\ {\rm d}x\ {\rm d}\tau
=\int_{\Omega} {}\D u(x,t)*w_\ell^M(x,t)\ {\rm d}x\\
& =\int_{\Omega} \frac{t^{-\alpha}}{\Gamma(1-\alpha)}*\frac{\partial u}{\partial t}(x,t)*w_\ell^M(x,t)\ {\rm d}x 
= \int_{\Omega} \frac{t^{-\alpha}}{\Gamma(1-\alpha)}*\left(\frac{\partial u}{\partial t}(x,t)*w_\ell^M(x,t)\right)\ {\rm d}x, 
\end{aligned}
\end{equation*}
where $*$ represents the convolution in $t$.
Due to the zero initial conditions of $u$ and $w_\ell^M$,
we have
$$\frac{\partial u}{\partial t}(x,t)*w_\ell^M(x,t)=u(x,t)*\frac{\partial w_\ell^M}{\partial t}(x,t).$$
Hence, 
\begin{equation*}
\begin{aligned}
I_1&=\int_{\Omega} \frac{t^{-\alpha}}{\Gamma(1-\alpha)}*\frac{\partial w_\ell^M}{\partial t}(x,t)*u(x,t)\ {\rm d}x
=\int_{\Omega} {}\D w_\ell^M *u(x,t)\ {\rm d}x\\
&=\int_0^t \int_{\Omega} {}\D w_\ell^M(x,t-\tau) 
u(x,\tau) \ {\rm d}x\ {\rm d}\tau.
\end{aligned}
\end{equation*}
For the term $I_2$,
Green's first formula and the boundary condition of $w_\ell^M$ give that  
\begin{equation*}
\begin{aligned}
I_2&=\int_0^t \int_\Omega-\triangle u(x,\tau)w_\ell^M(x,t-\tau)\ {\rm d}x\ {\rm d}\tau\\
&=\int_0^t \int_{\Omega} \nabla u(x,\tau)\cdot  \nabla w_\ell^M(x,t-\tau)
\ {\rm d}x\ {\rm d}\tau 
-\int_0^t \int_{\partial\Omega} \frac{\partial u}{\n}(x,\tau) \psi_\ell^M(x)
\ {\rm d}x\ {\rm d}\tau. 	
\end{aligned}
\end{equation*}
The results of $I_1$ and $I_2$,
\eqref{equality_1} and the definition of $\psi_\ell^M$ now show that 
\begin{equation*}
\begin{aligned}
\int_0^t\!\int_D\! w_\ell^M(x,\tau)\ {\rm d}x\, {\rm d}\tau
=&\int_0^t\!\int_{\Omega} \bigl[{}\D w_\ell^M(x,t-\tau) u(x,\tau)\; + \\
&\qquad\qquad\qquad \nabla w_\ell^M(x,t-\tau)\cdot\!\nabla u(x,\tau)\bigr]\,{\rm d}x\,{\rm d}\tau \\
\qquad &-\int_0^t\!\int_{\partial\Omega} \frac{\partial u}{\n}(x,\tau) \psi_\ell^M(x)
\, {\rm d}x\, {\rm d}\tau\\
=&-\int_0^t \sum_{m=1}^M c_m^c(\tau) \xi_m^c(z_\ell)+c_m^s(\tau) \xi_m^s(z_\ell)\,
{\rm d}\tau,
\end{aligned}
\end{equation*}
where $c_m^{\{c,s\}}(\tau)$ are the Fourier coefficients of
$\frac{\partial u}{\n}(x,\tau)$ with respect to the basis 
$\{\xi_m^{\{c,s\}}(x):m\in \mathbb{N}\}$ in $L^2(\partial \Omega)$.
Taking derivative with respect to $t$ in the above yields 
\begin{equation*}
\begin{aligned}
\int_D w_\ell^M(x,t)\ {\rm d}x=
-\sum_{m=1}^M [c_m^c(t) \xi_m^c(z_\ell)+c_m^s(t) \xi_m^s(z_\ell)],
\end{aligned}
\end{equation*}
which together with the pointwise convergence of the Fourier series gives 
$$
\lim_{M\to \infty} \int_D w_\ell^M(x,t)\ {\rm d}x=-\frac{\partial u}{\n}(z_\ell,t),
\quad  \ell=1,2
$$
and completes the proof.
\end{proof}

Since $\psi_\ell^M\in L^2(\Omega),$ we can represent its Fourier expansion as 
$\psi_\ell^M=\sum_{n=1}^{\infty} a_{\ell,n}^M \varphi_n$.
This result, Lemma~\ref{mittag_derivative}, \eqref{solution} and
\cite[Theorem 3.1]{HettlichRundell:2001} lead to the following corollary. 
\begin{corollary}
The spectral representation of $w_\ell^M$ is 
\begin{equation}\label{eqn:spectral_rep}
w_\ell^M(x,t)=\sum_{n=1}^\infty a_{\ell,n}^M [1-E_{\alpha,1}(-\lambda_nt^\alpha)]\varphi_n(x),
\end{equation}
where 
\begin{equation}\label{alM}
a_{\ell,n}:=\lim_{M\to \infty}a_{\ell,n}^M=(w_n/\sqrt{\lambda_n})~J_{m+1}(\sqrt{\lambda_n})~ \xi_m^{\{c,s\}}(z_\ell).
\end{equation} 
\end{corollary}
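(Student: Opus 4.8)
The plan is to read off the spectral representation \eqref{eqn:spectral_rep} from the classical eigenfunction expansion for the homogeneous subdiffusion equation, and then to identify the limiting coefficients $a_{\ell,n}$ by a Green's-identity computation combined with a Bessel recurrence.

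First I would record the spectral form of $u_\ell^M$ itself. Since $u_\ell^M$ is the weak solution of $\D u_\ell^M-\triangle u_\ell^M=0$ in $\Omega\times(0,T)$ with homogeneous Dirichlet data and initial value $-\psi_\ell^M=-\sum_n a_{\ell,n}^M\varphi_n$ (the $L^2(\Omega)$ Fourier expansion introduced just before the corollary), the eigenfunction expansion for the pure initial-value problem (\cite{SakamotoYamamoto:2011}; cf.\ \cite[Theorem~3.1]{HettlichRundell:2001} for the template at $\alpha=1$, and \eqref{solution}) gives
\[
u_\ell^M(x,t)=-\sum_{n=1}^\infty a_{\ell,n}^M\,E_{\alpha,1}(-\lambda_n t^\alpha)\,\varphi_n(x),
\]
the series converging in $C([0,T];L^2(\Omega))$ and, by $\lambda_n|E_{\alpha,1}(-\lambda_n t^\alpha)|\le C t^{-\alpha}$, in $C((0,T];H^2(\Omega))$. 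Adding $\psi_\ell^M=\sum_n a_{\ell,n}^M\varphi_n$ yields \eqref{eqn:spectral_rep}. Here Lemma~\ref{mittag_derivative} is what legitimizes differentiating the series term by term: from $\frac{d}{dt}E_{\alpha,1}(-\lambda_n t^\alpha)=-\lambda_n t^{\alpha-1}E_{\alpha,\alpha}(-\lambda_n t^\alpha)$ and the definition of the Djrbashian--Caputo derivative one reads off $\D\bigl[1-E_{\alpha,1}(-\lambda_n t^\alpha)\bigr]=\lambda_n E_{\alpha,1}(-\lambda_n t^\alpha)$, whence $\D w_\ell^M=\sum_n a_{\ell,n}^M\lambda_n E_{\alpha,1}(-\lambda_n t^\alpha)\varphi_n$ converges in $C((0,T];L^2(\Omega))$; this both confirms that \eqref{eqn:spectral_rep} solves the homogeneous FDE and is consistent with the regularity asserted in Lemma~\ref{wlM}.

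For \eqref{alM} I would compute $a_{\ell,n}=\lim_{M\to\infty}a_{\ell,n}^M$ as follows. By definition $a_{\ell,n}^M=(\psi_\ell^M,\varphi_n)_{L^2(\Omega)}$; since $\psi_\ell^M$ is a finite linear combination of harmonic functions and $-\triangle\varphi_n=\lambda_n\varphi_n$ with $\varphi_n|_{\partial\Omega}=0$, Green's second identity collapses this to the boundary integral $a_{\ell,n}^M=-\lambda_n^{-1}\int_{\partial\Omega}\psi_\ell^M\,\frac{\partial\varphi_n}{\n}\,ds$. But $\int_{\partial\Omega}\psi_\ell^M\,h\,ds$ is precisely the $M$-th partial sum of the Fourier expansion of the smooth function $h=\frac{\partial\varphi_n}{\n}$ in the orthonormal boundary basis $\{\xi_m^{c,s}\}$, evaluated at $z_\ell$, so it converges to $\frac{\partial\varphi_n}{\n}(z_\ell)$ and hence $a_{\ell,n}=-\lambda_n^{-1}\frac{\partial\varphi_n}{\n}(z_\ell)$. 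Finally, on the unit circle $\frac{\partial}{\n}=\frac{\partial}{\partial r}$, so by the polar form \eqref{eigenfunction} $\frac{\partial\varphi_n}{\n}(z_\ell)=w_n\sqrt{\lambda_n}\,J_m'(\sqrt{\lambda_n})\cos(m\theta_\ell+\phi_n)$; since $\varphi_n$ vanishes on $\partial\Omega$ the value $\sqrt{\lambda_n}$ is a zero of $J_m$, and the recurrence $J_m'(z)=\tfrac{m}{z}J_m(z)-J_{m+1}(z)$ reduces this to $-w_n\sqrt{\lambda_n}\,J_{m+1}(\sqrt{\lambda_n})\cos(m\theta_\ell+\phi_n)$. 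Identifying $\cos(m\theta_\ell+\phi_n)$, up to the fixed normalization of the $\xi$-basis, with $\xi_m^c(z_\ell)$ or $\xi_m^s(z_\ell)$ according as $\phi_n=0$ or $\pi/2$, and substituting, gives \eqref{alM}.

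I expect the main obstacle to be the two limit interchanges rather than the algebra. One must make precise the sense in which $\int_{\partial\Omega}\psi_\ell^M\,h\,ds\to h(z_\ell)$ — that is, that $\psi_\ell^M|_{\partial\Omega}\to\delta_{z_\ell}$ in $H^{-s}(\partial\Omega)$ for sufficiently large $s$, paired against the $C^\infty(\partial\Omega)$ trace $\frac{\partial\varphi_n}{\n}$ — which is exactly where the smoothness of the Dirichlet eigenfunctions up to $\partial\Omega$ enters. Separately, one must control the $n$-sum in \eqref{eqn:spectral_rep}, and in its $\D$-derivative, uniformly enough in $M$ to pass to the limit $M\to\infty$ under the sum; here the decay of the coefficients (the boundary-integral formula gives $a_{\ell,n}^M=O(\lambda_n^{-1/2})$ via Bessel asymptotics) together with the Mittag-Leffler bounds underlying Theorem~\ref{direct} do the work. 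Once these analytic points are secured, the remaining steps — Green's identity and the Bessel recurrence — are routine.
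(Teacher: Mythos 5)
Your argument is correct and follows the route the paper intends: \eqref{eqn:spectral_rep} is exactly the eigenfunction expansion of the homogeneous subdiffusion problem with initial datum $-\psi_\ell^M$ (term-by-term time behaviour justified by Lemma~\ref{mittag_derivative}), with $\psi_\ell^M=\sum_n a_{\ell,n}^M\varphi_n$ added back. Your derivation of $a_{\ell,n}$ via Green's second identity plus $J_m'(z)=\tfrac{m}{z}J_m(z)-J_{m+1}(z)$ at a zero of $J_m$ is equivalent to the paper's direct evaluation of $(\psi_\ell^M,\varphi_n)_{L^2(\Omega)}$ using $[t^{m+1}J_{m+1}(t)]'=t^{m+1}J_m(t)$ (the computation imported from \cite{HettlichRundell:2001} and repeated later for $b_n$); note that the limit in \eqref{alM} is actually trivial, since $a_{\ell,n}^M$ stabilizes as soon as $M\ge m(n)$, which disposes of the delta-sequence convergence issue you flagged as the main obstacle.
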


\subsection{Uniqueness theorem}
\begin{theorem}\label{uniqueness}
Denote the solutions of \eqref{FDE} with respect to $D_1$ and $D_2$ by
$u_j,\ j=1,2,$ and
$z_1=(\cos{\theta_1},\sin{\theta_1}), z_2=(\cos{\theta_2},\sin{\theta_2})$
satisfy the condition
\begin{equation}\label{eqn:theta_cond}
\theta_1-\theta_2 \notin \pi \mathbb{Q}
\end{equation}
where $\mathbb{Q}$ is the set of rational numbers.
Then  
$$\frac{\partial u_1}{\n}(z_\ell, t)=\frac{\partial u_2}{\n}(z_\ell, t), \ t\in(0,T),\ \ell=1,2$$
implies that $D_1=D_2.$ 	
\end{theorem}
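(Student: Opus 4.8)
The plan is to convert the equality of boundary fluxes into an infinite linear system for the Fourier--Bessel coefficients of $\chi_{D_1}-\chi_{D_2}$, and then to decouple time from angle in that system: the time dependence is removed using the linear independence of the Mittag-Leffler profiles, and the surviving angular relations are closed off by the arithmetic condition \eqref{eqn:theta_cond}. First I would combine Lemma~\ref{wlM} with the spectral representation \eqref{eqn:spectral_rep} and the limit \eqref{alM} to obtain, for $\ell=1,2$ and $j=1,2$,
\[
\frac{\partial u_j}{\n}(z_\ell,t)=-\sum_{n=1}^{\infty} a_{\ell,n}\Bigl(\int_{D_j}\varphi_n\,{\rm d}y\Bigr)\bigl[1-E_{\alpha,1}(-\lambda_n t^{\alpha})\bigr];
\]
passing $\lim_{M\to\infty}$ through the integral over $D_j$ and through the $n$--series is justified by the uniform bound $0\le 1-E_{\alpha,1}(-\lambda_n t^{\alpha})<1$, the decay $|a_{\ell,n}|\lesssim\lambda_n^{-1/2}$ (from the Bessel normalisation $w_n\asymp|J_{m+1}(\sqrt{\lambda_n})|^{-1}$), and the fact that $\chi_{D_j}\in H^{s}(\Omega)$ for every $s<\tfrac12$. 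Writing $b_n:=\langle\chi_{D_1}-\chi_{D_2},\varphi_n\rangle=\int_{D_1}\varphi_n-\int_{D_2}\varphi_n$, the hypothesis becomes
\[
\sum_{n=1}^{\infty} a_{\ell,n}\,b_n\,\bigl[1-E_{\alpha,1}(-\lambda_n t^{\alpha})\bigr]=0,\qquad t\in(0,T),\ \ell=1,2 .
\]

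Next I would eliminate the time variable. The series above is real--analytic in $t$ on $(0,\infty)$, so the identity extends from $(0,T)$ to all $t>0$. Grouping the sum over the \emph{distinct} Dirichlet eigenvalues $\{\mu_k\}$ and taking the Laplace transform, with $\mathcal{L}\bigl[1-E_{\alpha,1}(-\mu t^{\alpha})\bigr](s)=\mu\,\bigl[s(s^{\alpha}+\mu)\bigr]^{-1}$, the identity collapses to $\sum_k \mu_k\,C_{\ell,k}\,(s^{\alpha}+\mu_k)^{-1}=0$, where $C_{\ell,k}:=\sum_{n:\lambda_n=\mu_k} a_{\ell,n} b_n$. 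Viewed as a meromorphic function of $\eta=s^{\alpha}$, the left side has simple poles at $\eta=-\mu_k$, so every residue must vanish; hence $C_{\ell,k}=0$ for all $k$ and $\ell=1,2$.

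Now I would use the disc geometry. By Bourget's hypothesis (Siegel's theorem) $J_m$ and $J_{m'}$ have no common positive zero for $m\ne m'$, so each eigenvalue $\mu_k$ belongs to a single angular index $m=m(k)$, and its eigenspace is spanned by $w\,J_m(\sqrt{\mu_k}\,r)\cos m\theta$ together with, when $m\ge1$, $w\,J_m(\sqrt{\mu_k}\,r)\sin m\theta$. Using \eqref{alM}, the identity $\xi_m^{\{c,s\}}(z_\ell)=\tfrac1\pi\bigl(\cos m\theta_\ell,\sin m\theta_\ell\bigr)$ on $\partial\Omega$, and $J_{m+1}(\sqrt{\mu_k})\ne0$ (consecutive Bessel functions share no zero), the equations $C_{1,k}=C_{2,k}=0$ reduce, for $m\ge1$, to
\[
\cos(m\theta_\ell)\,b_k^{c}+\sin(m\theta_\ell)\,b_k^{s}=0,\qquad \ell=1,2,
\]
with $b_k^{c},b_k^{s}$ the components of $\chi_{D_1}-\chi_{D_2}$ on that eigenspace. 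The determinant of this $2\times2$ system is $\sin\!\bigl(m(\theta_1-\theta_2)\bigr)$, which by \eqref{eqn:theta_cond} is nonzero for every $m\ge1$; for $m=0$ the single equation $C_{1,k}=0$ already forces $b_k=0$. Therefore $b_n=0$ for all $n$, and by completeness of $\{\varphi_n\}$ in $L^2(\Omega)$ we get $\chi_{D_1}=\chi_{D_2}$ a.e.; since $\partial D_j=\{q_j(\theta)(\cos\theta,\sin\theta)^{\top}\}$ with $q_j$ continuous, $|D_1\triangle D_2|=0$ forces $q_1\equiv q_2$, i.e.\ $D_1=D_2$.

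I expect the main obstacle to be the first step together with the analyticity invoked at the start of the second: one must justify the termwise $t$--differentiation of the Fourier series in the proof of Lemma~\ref{wlM} and the interchange of the limit $M\to\infty$, the spatial integration and the differentiation, so that the flux is genuinely represented by the Mittag-Leffler series above, and then verify that this series is analytic in $t$ on $(0,\infty)$ so the Laplace-transform step is legitimate. Once this analytic bookkeeping is in place the remainder is essentially mechanical, and the third step makes transparent both why \eqref{eqn:theta_cond} is the right hypothesis and why two observation points are needed: the eigenspaces with $m\ge1$ are two--dimensional, so a single point can never separate the $\cos m\theta$ and $\sin m\theta$ contributions.
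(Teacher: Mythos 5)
Your proposal is correct and follows essentially the same route as the paper: reduce the flux equality via Lemma~\ref{wlM} and \eqref{eqn:spectral_rep} to a vanishing Mittag-Leffler series, extend in $t$ by analyticity, Laplace-transform and kill the residues at $\eta=-\lambda_k$ to decouple the eigenvalues, then solve the resulting $2\times2$ angular system and invoke completeness of $\{\varphi_n\}$. The only difference is cosmetic: the paper normalizes $\theta_1=0$ and argues by cases on the phase $\phi_{n_k}$, whereas you compute the determinant $\sin\bigl(m(\theta_1-\theta_2)\bigr)$ directly, which is an equivalent (and arguably more transparent) way to see where condition \eqref{eqn:theta_cond} enters.
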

\begin{proof}
Without loss of generality we can let $\theta_1=0$.
By Lemma~\ref{wlM} and \eqref{eqn:spectral_rep}, we obtain 
\begin{equation}\label{eqn:spectral_cond}
  \sum_{n=1}^\infty a_{\ell,n}[1-E_{\alpha,1}(-\lambda_nt^\alpha)] 
  \left(\int_{D_1} \varphi_n(x)\ {\rm d}x -\int_{D_2} \varphi_n(x) 
  \ {\rm d}x\right)=0, \ t\in (0,T),\ \ell=1,2.
\end{equation}
The analyticity of the Mittag-Leffler function 
$E_{\alpha,1}(-\lambda_nt^\alpha)$ gives 
\begin{equation}\label{IML}
\sum_{n=1}^\infty a_{\ell,n}I_n[1-E_{\alpha,1}(-\lambda_nt^\alpha)]=0, 
\quad t\in (0, \infty),
\end{equation}
where
$$
I_n:=\int_{D_1} \varphi_n(x)\ {\rm d}x -\int_{D_2} \varphi_n(x) \ {\rm d}x. 
$$
Denoting the distinct eigenvalues of the Laplacian again by 
$\{\lambda_k:k\in \mathbb{N}^+\}$ and taking the Laplace transform
$t\to s$ in \eqref{IML}, we have
\begin{equation*}
 \sum_{k=1}^\infty \left(\sum_{\lambda_n=\lambda_k} a_{\ell,n} 
 I_n\right)\frac{\lambda_k}{(s^\alpha+\lambda_k)}=0, \quad s\in \mathbb{C}.
\end{equation*}
Letting $\eta=s^\alpha$  shows that the function
\begin{equation}\label{equality_4}
\Xi(\eta) := \sum_{k=1}^\infty \left(\sum_{\lambda_n=\lambda_k} a_{\ell,n} 
 I_n\right)\frac{\lambda_k}{\eta+\lambda_k}=0
\end{equation}
is analytic in $\eta$ with poles at $\eta=\{-\lambda_k\}$ and corresponding
residues $\{\lambda_k \sum_{\lambda_n=\lambda_k} a_{\ell,n}I_n\}_k$.
However, since $\Xi(\eta)$ vanishes identically for $\eta$ real and positive,
it follows that these residues must be zero.
Then by the strict positivity of $\lambda_k$ we see that  
$\sum_{\lambda_n=\lambda_k} a_{\ell,n} I_n=0$ for $\ell=1,2$ and
each eigenvalue $\lambda_k$ of the Laplacian.

\par
For a fixed eigenvalue $\lambda_k$, denote its corresponding eigenfunctions by
$\varphi_{n_k}$ and $\varphi_{n_k+1}$.
These have different phases and hence
\begin{equation}\label{equality_3}
\sum_{n=n_k,n_k+1} a_{\ell,n} I_n=0,\quad \ell=1,2.
\end{equation} 
For the case of $\phi_{n_k}=0$,
since $\theta_1=0,\ \theta_1-\theta_2\notin\pi\mathbb{Q},$
then \eqref{alM} implies $a_{1,n_k}\ne 0,\ a_{1,n_k+1}=0$ and
$a_{2,n_k+1}\ne 0$.
Inserting this into \eqref{equality_3} yields $I_{n_k}=0$.
The above result means $a_{2,n_k+1} I_{n_k+1}=0$,
which together with $a_{2,n_k+1}\ne 0$ gives $I_{n_k+1}=0$.
Analogously, for the case of $\phi_{n_k}=\pi/2$, we can prove
$I_{n_k}=I_{n_k+1}=0$.
Hence, we can conclude that for each eigenvalue
$\lambda_k\in \{\lambda_n:\N +\}$, $I_{n_k}=I_{n_k+1}=0$, which means 
$$
\int_{D_1} \varphi_n(x)\ {\rm d}x -\int_{D_2} \varphi_n(x) \ {\rm d}x
=\int_{\Omega}(\chi_{{}_{D_1}}-\chi_{{}_{D_2}})\varphi_n(x)\ {\rm d}x =0,\ \N+. 
$$ 
This result, the completeness of $\{\varphi_n(x):\N +\}$ and the continuity
of the boundaries of $D_1$ and $D_2$ give that $D_1=D_2$.
\end{proof} 

\par
In practice, it is certainly possible that the measured data
can only be obtained after some initial time $T_0$ has elapsed, i.e. 
only $g_\ell(t),\ t\in [T_0,T]$ is obtained.
Hence, the following corollary is  important;
its proof follows immediately from the analyticity of the Mittag-Leffler
function and the proof of Theorem~\ref{uniqueness}. 

\begin{corollary}\label{cut}
With the same conditions of Theorem \ref{uniqueness} and a constant $T_0\in (0,T),$ $$\frac{\partial u_1}{\n}(z_\ell, t)=\frac{\partial u_2}{\n}(z_\ell, t)\ \text{on}\ [T_0,T],
\ \ell=1,2$$ will also imply $D_1=D_2$.
\end{corollary}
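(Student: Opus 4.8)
The plan is to reduce the ``data on $[T_0,T]$'' case to the ``data on $(0,T)$'' case already handled in Theorem~\ref{uniqueness} by exploiting the real-analyticity of $t\mapsto\frac{\partial u_j}{\n}(z_\ell,t)$ on $(0,\infty)$. First I would recall from the spectral representation \eqref{eqn:spectral_rep} together with Lemma~\ref{wlM} that the flux difference admits the series expression
\begin{equation*}
\frac{\partial u_1}{\n}(z_\ell,t)-\frac{\partial u_2}{\n}(z_\ell,t)
=-\sum_{n=1}^\infty a_{\ell,n}I_n\bigl[1-E_{\alpha,1}(-\lambda_n t^\alpha)\bigr],
\end{equation*}
where $I_n=\int_{D_1}\varphi_n-\int_{D_2}\varphi_n$, and that each term is analytic in $t$ on $(0,\infty)$ by the analyticity of the Mittag-Leffler function (as already invoked in passing from \eqref{eqn:spectral_cond} to \eqref{IML}). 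The term-by-term analyticity must be upgraded to analyticity of the sum; this is where the regularity estimates of Theorem~\ref{direct} (uniform bounds on $u_j$ and $\D u_j$, hence on the boundary flux via the trace theorem) enter, giving a locally uniform bound on the tails of the series and its $t$-derivatives on compact subsets of $(0,\infty)$, so that the sum is a real-analytic function of $t$ there.

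Next I would use the fact that a real-analytic function on the connected open interval $(0,T)$ that vanishes on the subinterval $[T_0,T]$ (which has nonempty interior) must vanish on all of $(0,T)$. Applying this to the flux difference for each $\ell=1,2$ gives
\begin{equation*}
\frac{\partial u_1}{\n}(z_\ell,t)=\frac{\partial u_2}{\n}(z_\ell,t),\qquad t\in(0,T),\ \ell=1,2,
\end{equation*}
which is precisely the hypothesis of Theorem~\ref{uniqueness}. Invoking that theorem directly yields $D_1=D_2$, completing the proof.

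The main obstacle I anticipate is the justification of the analyticity (rather than mere continuity) of the series in $t$ on $(0,\infty)$: one needs a decay estimate on $a_{\ell,n}I_n$ and on the derivatives $\frac{d^k}{dt^k}E_{\alpha,1}(-\lambda_n t^\alpha)$ — the latter supplied by Lemma~\ref{mittag_derivative} together with the standard bound $|E_{\alpha,\beta}(-x)|\le C/(1+x)$ for $x\ge0$ — that is strong enough to ensure the majorant series and all its term-by-term derivatives converge locally uniformly away from $t=0$. Since the eigenvalues $\lambda_n$ grow like $n$ (Weyl's law in two dimensions) and the Mittag-Leffler factors decay like $\lambda_n^{-1}t^{-\alpha}$, while the coefficients $a_{\ell,n}I_n$ are at worst polynomially bounded in $n$, this is a routine but slightly delicate estimate; away from $t=0$ the extra powers of $\lambda_n$ from differentiation are more than compensated by the $t^{\alpha-k}$ prefactors and the $\lambda_n^{-1}$ decay, so the conclusion holds. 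Alternatively, and more cheaply, one can bypass the series entirely by noting that the ``analyticity of the Mittag-Leffler function'' argument used in Theorem~\ref{uniqueness} to pass from \eqref{eqn:spectral_cond} to \eqref{IML} applies verbatim once the data agree on any interval with nonempty interior, since the Laplace-transform argument there only uses the values of a function analytic on $(0,\infty)$ restricted to an arbitrarily small positive interval; this is the route the parenthetical remark before the corollary points to, and it is the one I would write up.
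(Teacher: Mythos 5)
Your argument is correct and is essentially the paper's own: the paper proves this corollary in one line by appealing to the analyticity of the Mittag-Leffler function together with the proof of Theorem~\ref{uniqueness}, which is exactly the analytic-continuation-then-reduce strategy you describe (your second, ``cheaper'' route of observing that the Laplace-transform step only needs agreement on an interval with nonempty interior is the same idea in slightly different clothing). Your added care about upgrading term-by-term analyticity to analyticity of the sum fills in a detail the paper leaves implicit, but it does not change the approach.
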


\begin{remark}
The condition $\theta_1-\theta_2\notin \pi\mathbb{Q}$ is almost impossible
to satisfy in practice.
However, as we will show,  in the numerical section,
we only use the partial sum of the solution series to approximate the
exact spectral representation.
By taking a truncated basis, that is spectral cut-off of the functions
used to represent $\partial D$, we can show that satisfying
\eqref{eqn:theta_cond} is feasible.
Since in this case the number of eigenvalues is finite,
the upper bound $M$ of the degrees for the corresponding Bessel function
will also be finite.
Hence, in numerical reconstructions the condition
$\theta_1-\theta_2\notin \pi\mathbb{Q}$ can be weakened to
\begin{equation}\label{eqn:m-theta_cond}
\sin{m(\theta_1-\theta_2)}\ne 0,\quad m=1,2,\dots, M.
\end{equation}
\end{remark}

\subsection{The operators $G$ and $G'$}
In order to use Newton's method to recover $D$, we need to construct the
operator $G$ which maps $D$ to the flux data $\frac{\partial u}{\n}(z_\ell,t)$
then compute and demonstrate needed properties of its derivative $G'$.
In particular, to show the injectivity of $G'$.  

Recall that we have assumed the boundary of $D$ is star-like, i.e.   
$$
\partial D=\{q(\theta)(\cos{\theta},\sin{\theta})^{\top}:\theta\in[0,2\pi]\}.$$
Then by \eqref{solution}, the representation of $u(x,t)$ will be 
\begin{equation}\label{u}
\begin{aligned}
u(x,t)&=\sum_{n=1}^{\infty}\left(\int_0^t\int_{\Omega} \chi_{{}_D}\varphi_n(y)(t-\tau)^{\alpha-1}
E_{\alpha,\alpha}(-\lambda_n(t-\tau)^\alpha)\,{\rm d}y\,{\rm d}\tau \right)\varphi_n(x) \\
&=\sum_{n=1}^{\infty} \lambda_n^{-1}(1-E_{\alpha,1}(-\lambda_nt^\alpha))\varphi_n(r,\theta)
\int_0^{2\pi}\!\!\int_0^{q(s)} \varphi_n(\rho,s)\rho\,{\rm d}\rho\,{\rm d}s.
\end{aligned}
\end{equation}	
Now we can define the operator $G$ as  $G:q\mapsto(\partial_r
u(1,\theta_1,t),\partial_r
u(1,\theta_2,t)),$ where $\theta_\ell,\ \ell=1,2$ are the polar angles of the observation points $z_\ell$ on $\partial\Omega$.
the polar representation of $\varphi_n$ is
$\varphi_n(r,\theta) = w_n J_m(\sqrt{\lambda_n}\,r)\cos(m\theta+\phi_n)$
and we use the relation $J'_m(z)=-J_{m+1}(z)+\frac{m}{z}J_m(z)$ and the fact
that $\sqrt{\lambda_n}$ is a zero of the m-th Bessel function $J_m$ to see
that the radial derivative  of the radial part of $\varphi_n$ is
$w_n\sqrt{\lambda_n}J_{m+1}(\sqrt{\lambda_n}).$
Thus a direct calculation from \eqref{u} yields the $\ell$-th component of $G$ as 
\begin{equation}\label{equality_3}
\begin{aligned}
G_\ell(q)(t)=\sum_{n=1}^{\infty}b_n[1-E_{\alpha,1}(-\lambda_nt^\alpha)]\cos{(m\theta_\ell\!-\!\phi_n)}
\int_0^{2\pi}\!\Phi_n(q(s))\cos{(ms-\phi_n)}\,{\rm d}s,
\end{aligned}
\end{equation}
where
$$
b_n=-w_n^2\lambda_n^{-3/2}J_{m+1}(\sqrt{\lambda_n}),\quad
\Phi_n(x):=\int_0^{x\sqrt{\lambda_n}}\!\!\rho\, J_m(\rho)\,{\rm d}\rho.
$$
To compute $w_n$ we require the integral
$\int_0^{2\pi}\int_0^1 \rho J_m(\sqrt{\lambda_n}\rho)^2d\rho$.
The recursion formulae $[t^{-m}J_m(t)]' = -t^{-m}J_{m+1}(t)$ and
$[t^{m}J_m(t)]' = t^{m}J_{m-1}(t)$ give the relations
$2tJ_m(t)^2 = [t^2 J_m(t)^2 - J_{m+1}J_{m-1}]'$ and 
$J_{m-1}(t) = J_m'(t) = -J_{m+1}(t)$.
These and the fact that $J_m(\sqrt{\lambda_n}) = 0$ show that
$\int_0^{2\pi}\int_0^1 \rho J_m(\sqrt{\lambda_n}\rho)^2d\rho =
\frac{1}{2}J_{m+1}(\sqrt{\lambda_n}\rho)^2$.
Thus $\|\phi_n\|_2^2 = 1/w_n^2 =
\frac{1}{2} \eta_n\pi J_{m+1}(\sqrt{\lambda_n}\rho)^2$ where
$\eta_n = 1$ if $m(n)=0$ and $\frac{1}{2}$ if $m>0$.
Combining all of these shows that
$$
b_n = \frac{1}{\eta_n \pi \lambda_n^{3/2} J_{m+1}(\sqrt{\lambda_n})}.
$$
These computations mirror those of \cite{HettlichRundell:2001}
for the parabolic case.
From \eqref{equality_3}, with the notation $\sum'$ which indicates the index over distinct eigenvalues, we obtain 
\begin{equation}\label{G}
\begin{aligned}
G_\ell(q)(t)&=\sum_{n=1}^{\infty}{}^{'}\, b_n(1-E_{\alpha,1}(-\lambda_nt^\alpha))
\Big[\cos{(m\theta_\ell)}
\int_0^{2\pi}\Phi_n(q(s))\cos{(ms)}\,{\rm d}s\\
&\qquad\quad+\sin{(m\theta_\ell)}
\int_0^{2\pi}\Phi_n(q(s))\sin{(ms)}\,{\rm d}s\Big]\\
&=\sum_{n=1}^{\infty}{}^{'}\  b_n(1-E_{\alpha,1}(-\lambda_nt^\alpha))
\int_0^{2\pi}\Phi_n(q(s))\cos{(m(s-\theta_\ell))}\,{\rm d}s,
\end{aligned}
\end{equation}   
and 
\begin{equation}\label{G'}
\begin{aligned}
G_\ell'[q]h(t)=\sum_{n=1}^{\infty}{}^{'}\,\lambda_nb_n(1-E_{\alpha,1}(-\lambda_nt^\alpha))
\int_0^{2\pi}\!\!q(s)J_m(\sqrt{\lambda_n}q(s))\cos{(m(s-\theta_\ell))}h(s)\,{\rm d}s.
\end{aligned}
\end{equation}
We can now define $G$ and $G'$ by

\begin{definition}
	$$
	G(q)(t)=\begin{bmatrix}
	G_1(q)(t) \vspace{5pt}\\	
	G_2(q)(t) 
	\end{bmatrix},
	\quad	
	G'[q]h(t)=\begin{bmatrix}
	G'_1[q]h(t) \vspace{5pt}\\
	G'_2[q]h(t) 
	\end{bmatrix},
	$$
where $G_\ell,\ G'_\ell,\ \ell=1,2$ are defined in \eqref{G} and \eqref{G'}.
\end{definition}

\subsection{Injectivity of $G'$} 
We are now able to show the injectivity of $G'$.  
\begin{corollary}
Under the condition \eqref{eqn:theta_cond}, $G'[q]h(t)=0$ implies that $h=0$.
\end{corollary}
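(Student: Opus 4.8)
The plan is to mirror, for the linearized operator $G'$, the spectral argument already used in the proof of Theorem~\ref{uniqueness}, and then to upgrade the resulting vanishing‑moment conditions on $h$ by a duality argument in $H^{-1}(\Omega)$. Concretely, I would start from $G'_\ell[q]h(t)=0$ for $t\in(0,T)$, $\ell=1,2$, and run exactly the steps that led from \eqref{eqn:spectral_cond} to \eqref{equality_4}.

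First, using the series \eqref{G'} together with the analyticity of $t\mapsto E_{\alpha,1}(-\lambda_n t^\alpha)$, the identity extends from $(0,T)$ to all $t\in(0,\infty)$. Applying the Laplace transform $t\to s$, using $\mathcal{L}[\,1-E_{\alpha,1}(-\lambda t^\alpha)\,](s)=\lambda/(s(s^\alpha+\lambda))$ and setting $\eta=s^\alpha$, I arrive as in \eqref{equality_4} at an analytic function of $\eta$ of the form $\sum_k c_{\ell,k}\,\lambda_k/(\eta+\lambda_k)\equiv 0$ with simple poles at $\eta=-\lambda_k$; vanishing on the positive real axis forces every residue, hence every coefficient $c_{\ell,k}$, to be zero. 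Unwinding the definition, this says
$$
\lambda_k b_k\int_0^{2\pi} q(s)\,J_{m}\bigl(\sqrt{\lambda_k}\,q(s)\bigr)\cos\bigl(m(s-\theta_\ell)\bigr)h(s)\,\mathrm{d}s=0,\qquad \ell=1,2,
$$
for every distinct eigenvalue $\lambda_k$ with associated degree $m=m(k)$. Since consecutive Bessel functions share no positive zero, $J_{m+1}(\sqrt{\lambda_k})\neq 0$, so $\lambda_k b_k\neq 0$ and the integral itself vanishes for $\ell=1,2$.

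Next I would expand $\cos(m(s-\theta_\ell))=\cos(ms)\cos(m\theta_\ell)+\sin(ms)\sin(m\theta_\ell)$ and read the two relations ($\ell=1,2$) as a $2\times2$ linear system for the pair
$$
A_k=\int_0^{2\pi}\! q(s)J_m(\sqrt{\lambda_k}q(s))\cos(ms)h(s)\,\mathrm{d}s,\qquad
B_k=\int_0^{2\pi}\! q(s)J_m(\sqrt{\lambda_k}q(s))\sin(ms)h(s)\,\mathrm{d}s,
$$
whose determinant is $\sin\bigl(m(\theta_1-\theta_2)\bigr)$, nonzero for every $m\ge 1$ by \eqref{eqn:theta_cond} (for $m=0$ the sine term is absent and the single relation already gives $A_k=0$). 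Hence $A_k=B_k=0$ for all $k$; recalling $\varphi_n(r,\theta)=w_nJ_m(\sqrt{\lambda_n}r)\cos(m\theta+\phi_n)$ with $w_n\neq0$, this is exactly
$$
\int_0^{2\pi}\varphi_n\bigl(q(s),s\bigr)\,q(s)\,h(s)\,\mathrm{d}s=0\qquad\text{for every }n .
$$

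Finally, to conclude $h=0$ I would introduce the distribution $\sigma$ on $\Omega$ defined by $\langle\sigma,\phi\rangle=\int_0^{2\pi}\phi\bigl(q(s)\cos s,q(s)\sin s\bigr)q(s)h(s)\,\mathrm{d}s$. Since $\partial D$ is a smooth curve with $\overline D\subset\Omega$, the trace theorem gives $|\langle\sigma,\phi\rangle|\le C\,\|h\|\,\|\phi\|_{H^1(\Omega)}$, so $\sigma\in H^{-1}(\Omega)=\bigl(H_0^1(\Omega)\bigr)'$. The last display says $\sigma$ annihilates every $\varphi_n$; but $\{\varphi_n/\sqrt{\lambda_n}\}$ is an orthonormal basis of $H_0^1(\Omega)$, so $\sigma$ vanishes on a dense subspace and therefore $\sigma=0$. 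Testing $\sigma$ against functions in $C_c^\infty(\Omega)$ whose restrictions to $\partial D$ are arbitrary, and using that $\theta\mapsto q(\theta)(\cos\theta,\sin\theta)$ parametrizes $\partial D$ bijectively with $q>0$, forces $h\equiv0$. The hardest part is precisely this last step: $\sigma$ is genuinely singular (a layer on a curve), so the $L^2(\Omega)$‑completeness of $\{\varphi_n\}$ used at the corresponding point of Theorem~\ref{uniqueness} no longer suffices and one must work in the dual pair $(H_0^1,H^{-1})$ — this is where the smoothness of $q$ and the compact containment $\overline D\subset\Omega$ enter. Everything before it (the Laplace‑transform/residue bookkeeping, including the interchange of summation and transform) is routine and identical to what was already done for \eqref{IML}–\eqref{equality_4}.
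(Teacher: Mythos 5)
Your argument is correct, and its first half---extending $G'_\ell[q]h\equiv 0$ from $(0,T)$ to all $t>0$ by analyticity, Laplace--transforming, substituting $\eta=s^\alpha$, forcing the residues at $\eta=-\lambda_k$ to vanish, noting $\lambda_k b_k\neq 0$ because $J_m$ and $J_{m+1}$ share no positive zeros, and then using $\sin\bigl(m(\theta_1-\theta_2)\bigr)\neq 0$ to decouple the cosine and sine moments---is exactly the reduction the paper performs by ``following the proof of Theorem~\ref{uniqueness}.'' Where you genuinely diverge is the last step: the paper passes from
$$
\int_0^{2\pi}q(s)J_m(\sqrt{\lambda_n}q(s))\cos{(m(s-\theta_\ell))}h(s)\,{\rm d}s=0,\qquad \N+,\ \ell=1,2,
$$
to $h=0$ by citing \cite[Section 4]{HettlichRundell:2001}, whereas you supply a self-contained completeness argument: you interpret $qh$ as the density of a single-layer distribution $\sigma$ on $\partial D$, use the trace theorem and $\overline D\subset\Omega$ to place $\sigma$ in $H^{-1}(\Omega)$, observe that $\{\varphi_n/\sqrt{\lambda_n}\}$ is a complete orthonormal system in $H_0^1(\Omega)$ so that $\sigma=0$, and then test against $C_c^\infty(\Omega)$ functions with arbitrarily prescribed traces on $\partial D$ to conclude $qh=0$, hence $h=0$ since $q>0$. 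This is sound, and you are right that plain $L^2(\Omega)$-completeness of $\{\varphi_n\}$ (which sufficed in Theorem~\ref{uniqueness}, where the relevant functional was $\chi_{{}_{D_1}}-\chi_{{}_{D_2}}\in L^2$) does not suffice here, since $\sigma$ is a genuine layer; the dual pairing with $H_0^1$ is the correct substitute. What your route buys is independence from the external reference at the cost of invoking the smoothness of $q$ and the compact containment $\overline D\subset\Omega$, both of which are already standing assumptions of the paper.
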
	
\begin{proof}
$G'[q]h(t)=0$ leads to  $G_1'[q]h(t)=G_2'[q]h(t)=0.$ 
Following the proof of Theorem~\ref{uniqueness}, we have 
$$
\int_0^{2\pi}q(s)J_m(\sqrt{\lambda_n}q(s))\cos{(m(s-\theta_\ell))}h(s)\ {\rm d}s=0,\  \N+,\ \ell=1,2.
$$ 
Applying the proof in \cite[Section 4]{HettlichRundell:2001}
shows that $h=0$.
\end{proof}

In the introduction we noted that nonhomogeneous initial/boundary conditions
can be added to \eqref{FDE} with no change in scope and the same holds true
if the source is of the form $a(t)\chi(D)$ for $a(t)$ known.
An interesting question arises if the time dependent $a(t)$ 
has to be determined as well as $D$.
Even in the case $a(t)$ is constant more than two observation points would
now be needed, but it is easy to see that three would suffice.
It is a reasonable conjecture that three points would also suffice
to determine in addition $a(t)$ although this isn't immediately clear.
Although the unknown source would still give rise to a linear fractional
equation with the advantage that representation results would still be clear,
the fact that the two unknowns $a(t)$ and $D$ are coupled in a nonlinear
fashion would add considerable complexity to the new operators $G$ and $G'$.

\section{Numerical reconstruction}

\subsection{Iterative algorithm} 
\par
In this section, Newton's method will be used to recover $q(\theta)$.
Due to the ill-posedness of this problem, regularization is necessary and 
we will use a combination of a prior assumption on $q(s)$ together with
Tikhonov's method which leads to the Levenberg-Marquardt-type formula 
\begin{equation}\label{iteration}
q_{n+1}=q_n+[(G'(q_n))^*~G'(q_n)+\beta P]^{-1}
(G'(q_n))^* (g^\delta-G(q_n)).
\end{equation} 
Here, $g^\delta$ denotes the perturbed  measured data with
$\|(g-g^\delta)/g\|_{C(0,T)}\le \delta$, 
$q_n$ is the n-th approximation of the radial term of the star-like boundary, 
$\beta$ is the regularized parameter and $P$ is the penalized matrix. 
In this section, we only consider the unknown $q$ to be taken from
the trigonometric polynomial space with dimension up to degree $M$, i.e. 
$$
q(\theta)=\frac{1}{2}q_0+\sum_{n=1}^{M}\left(q_n^c \cos{n\theta}+q_n^s\sin{n\theta}\right).
$$
As will be seen, the effective value for $M$ that can be obtained will
be quite small.
This itself provides a regularization by spectral cut off,
but if used alone it leads to a quite limited regularization possibility;
hence the combination with \eqref{iteration}.

We also want to ensure the approximated $q_n$ is sufficiently smooth and so
we set the penalty term be the $H^2$ semi-norm of $q_n,$ which implies that $P$ is a $(2M+1)\times(2M+1)$ diagonal matrix with 
$$P_{1,1}=1,\ P_{i+1,i+1}=P_{i+M+1,i+M+1}=i^2,\ i=1,\dots,M.$$ 
The stopping criterion used was
$\|g^\delta-G(q_n)\|_{L^2(0,T)}\le \epsilon,\ \epsilon=O(\delta)$.
A good initial approximation is often essential for the convergence
of Newton schemes in such interior domain reconstructions and the current case
is no different.
Fortunately, we have a simple method of achieving this as noted in
\cite{HettlichRundell:2001}.
We take $q_0$ to be a circle of radius $\bar r$ with centre
$\bar x = (\bar x_1,\bar x_2)$.
An extended circular source has exactly the same boundary effect as
a delta-function point source at its centre.
Such a pole would generate a disturbance equal to $G_\alpha(\bar x - z,t)$
where $G_\alpha$ is the fundamental solution for the subdiffusion operator 
in \eqref{FDE}.  
This solution is available as a Wright function,
$G_\alpha(x,t) = t^{-\alpha/2}M(|x|/t^{\alpha/2})$ where
$M(z) = \sum_0^\infty \frac{(-z)^n}{n!\Gamma(1- \frac{\alpha}{2}(n+1))}$, 
see~\cite{MainardiMuraPagnini:2010}.
However, we do not require such precision for the initial approximation
purpose.
We can take the time-independent version by approximation of the steady state
values for each flux $g_\ell(t_\infty)$.
This gives $m$ values at positions $z_\ell$ and we simply perform a
least-squares fit to obtain the centre $\bar x$ and weight $\bar\rho$
of the pole based on Laplace equation for a circle.
Then, since $\bar\rho = \pi \bar r^2$,
we readily obtain our approximating circle.
In the case of only two observation points there is insufficient information
in general and then we simply assume the approximating circle has centre the
origin.

\subsection{Decomposition of $G$ and $G'$}
From the definitions of $G$ and $G'$ we can see the convergence rates of
their series representations should be slow since the time-dependent term 
$1-E_{\alpha,1}(-\lambda_n t^\alpha)$ does not converge to zero for $n$ large.
Hence, we split $G,\ G'$ into their steady states and transient components as 
\begin{equation*}
\begin{aligned}
G_\ell(q)(t)=&\frac{\partial v }{\n}(z_\ell)-\sum_{n=1}^{\infty}{}^{'}\  b_nE_{\alpha,1}(-\lambda_nt^\alpha)
\int_0^{2\pi}\Phi_n(q(s))\cos{(m(s-\theta_\ell))}\ {\rm d}s,\\
G'_\ell(q)(t)=&\frac{\partial}{\partial q}\left(\frac{\partial v }{\n}(z_\ell)\right)\\
&-\sum_{n=1}^{\infty}{}^{'}\ \lambda_nb_nE_{\alpha,1}(-\lambda_nt^\alpha)
\int_0^{2\pi}q(s)J_m(\sqrt{\lambda_n}q(s))\cos{(m(s-\theta_\ell))}h(s)\ {\rm d}s, 
\end{aligned}
\end{equation*}
where $v$ is the solution of the equation  
\begin{equation*}
\begin{cases}
\begin{aligned}
-\triangle v(x)&=\chi_{{}_D}, &&x\in \Omega;\\
v(x)&=0, &&x\in \partial\Omega. 
\end{aligned}
\end{cases}
\end{equation*}
From 
\cite{HettlichRundell:1996} we can obtain
$\frac{\partial v }{\n}(z_\ell)$ and 
$\frac{\partial}{\partial q}\left(\frac{\partial v }{\n}(z_\ell)\right)$
from the following Fourier expansions 
\begin{equation*}
\begin{aligned}
\frac{\partial v }{\n}(z_\ell)&=\frac{a_0}{2}+\sum_{n=1}^{\infty}
\left(a_n^c\cos{n\theta_\ell}+a_n^s\sin{n\theta_\ell}\right),\\
\frac{\partial}{\partial q}\left(\frac{\partial v }{\n}(z_\ell)\right)
&=\frac{b_0}{2}+\sum_{n=1}^{\infty}
\left(b_n^c\cos{n\theta_\ell}+b_n^s\sin{n\theta_\ell}\right),
\end{aligned}
\end{equation*}
where 
\begin{equation*}
\begin{aligned}
a_n^c&=\frac{1}{(n+2)\pi}\int_0^{2\pi} 
[q(\theta)]^{n+2}\cos{n\theta}\, {\rm d}\theta,\qquad
&&a_n^s=\frac{1}{(n+2)\pi}\int_0^{2\pi} 
[q(\theta)]^{n+2}\sin{n\theta}\,{\rm d}\theta,\\
b_n^c&=\frac{1}{\pi}\int_0^{2\pi} 
[q(\theta)]^{n+1}\cos{n\theta}\ {\rm d}\theta,\qquad
&&b_n^s=\frac{1}{\pi}\int_0^{2\pi} 
[q(\theta)]^{n+1}\sin{n\theta}\ {\rm d}\theta.\\
\end{aligned}
\end{equation*}

\subsection{Forward problem and $L^1$ time-stepping}
\par To obtain the measured data $g$ and also to compute the forward map
we need to solve the \eqref{FDE} numerically.
The spectral representation of the solution $u(x,t)$ gives insight to the
problem but as our forcing function is discontinuous, the convergence,
in particular that of the boundary derivative, is very slow.
This forces an extremely large number of eigenfunctions to be taken in order
to obtain sufficient accuracy.
As an alternative to the spectral representation
we use a finite difference representation in space and the
$L^1$ time-stepping method \cite{JinLazarovZhou:2016}
to discretize the fractional derivative $\D$
\begin{equation*}
\begin{aligned}
\D u(x,t_N) &= \frac{1}{\Gamma(1-\alpha)}\sum^{N-1}_{j=0}
\int^{t_{j+1}}_{t_j} \frac{\partial u(x,s)}{\partial s} 
(t_N-s)^{-\alpha}\, ds \\
&\approx \frac{1}{\Gamma(1-\alpha)}\sum^{N-1}_{j=0} 
\frac{u(x,t_{j+1})-u(x,t_j)}{\tau}\int_{t_j}^{t_{j+1}}
(t_N-s)^{-\alpha}ds\\
&=\sum_{j=0}^{N-1}b_j\frac{u(x,t_{N-j})-u(x,t_{N-j-1})}
{\tau^\alpha}\\
&=\tau^{-\alpha} [b_0u(x,t_N)-b_{N-1}u(x,t_0)
+\sum_{j=1}^{N-1}(b_j-b_{j-1})u(x,t_{N-j})] ,
\end{aligned}
\end{equation*}
where $\tau$ is the step size of the uniform partition on $t$ and 
\begin{equation*}
b_j=((j+1)^{1-\alpha}-j^{1-\alpha})/\Gamma(2-\alpha),\ j=0,1,\ldots,N-1.
\end{equation*}

\par For the Laplace operator $\triangle,$ the polar form 
$
\triangle u=\frac{\partial^2 u}{\partial r^2}+\frac{1}{r}\frac{\partial u}{\partial r}
+\frac{1}{r^2}\frac{\partial^2 u}{\partial \theta^2}
$
is used since the domain $\Omega$ is the unit disc in $\mathbb{R}^2.$
With uniformly partitions $\{r_l\},\ \{\theta_k\}$ on the radius $r\in(0,1)$ and the 
angle $\theta\in[0,2\pi)$ respectively, the discretized form of $-\triangle$ is 
\begin{equation*}
\begin{aligned}
-\triangle u(l,k,t_N)=&
-\frac{1}{h_r^2}[u(l+1,k,t_N)+u(l-1,k,t_N)-2u(l,k,t_N)]\\
&-\frac{1}{2lh_r^2}[u(l+1,k,t_N)-u(l-1,k,t_N)]\\ &-\frac{1}{l^2h_r^2h_\theta^2}[u(l,k+1,t_N)+u(l,k-1,t_N)-2u(l,k,t_N)]\\
=&(-\frac{1}{h_r^2}+\frac{1}{2lh_r^2})u(l-1,k,t_N)+(-\frac{1}{h_r^2}-\frac{1}{2lh_r^2})u(l+1,k,t_N)\\
&+(\frac{2}{h_r^2}+\frac{2}{l^2h_r^2h_\theta^2})u(l,k,t_N)-\frac{u(l,k+1,t_N)}{l^2h_r^2h_\theta^2}-\frac{u(l,k-1,t_N)}{l^2h_r^2h_\theta^2},
\end{aligned}
\end{equation*}
where $u(l,k,t_N)=u(r_l,\theta_k,t_N),$ and $h_r,\ h_\theta$ are the step sizes of the partitions on $r$ 
and $\theta$ respectively.
Hence, the finite difference scheme of the forward problem of \eqref{FDE} is  
\begin{equation*}
\begin{aligned}
&(\tau^{-\alpha}b_0+\frac{2}{h_r^2}+\frac{2}{l^2h_r^2h_\theta^2})u(l,k,t_N)+
(-\frac{1}{h_r^2}+\frac{1}{2lh_r^2})u(l-1,k,t_N)\\
&+(-\frac{1}{h_r^2}-\frac{1}{2lh_r^2})u(l+1,k,t_N)+(-\frac{1}{l^2h_r^2h_\theta^2})u(l,k+1,t_N)+(-\frac{1}{l^2h_r^2h_\theta^2})u(l,k-1,t_N)\\
&\qquad\qquad\qquad = \tau^{-\alpha} b_{N-1}u(l,k,t_0)
-\sum_{j=1}^{N-1}\tau^{-\alpha}(b_j-b_{j-1})u(l,k,t_{N-j})+\chi_{{}_D}(l,k).
\end{aligned}
\end{equation*}

\subsection{Numerical results}
The purpose of this section is to investigate our ability to perform
reconstructions and in particular to investigate the difference
as a function of $\alpha$.
We will also look at the effect of different placements of the measurements
points, of the noise level in the data.
This will be accomplished by a series of experiments to be outlined below.

In all the figures to be shown, the legend is the following:
the (blue) dotted line is the exact curve; the (red) dashed line is the reconstructed curve;
and the bulleted points on the (blue) solid circle representing
the exterior boundary $\partial\Omega$ are the observation points $z_\ell$.

We first take $\alpha=0.9,$ the final time $T=1,$
the regularized parameter $\beta=10^{-2}$.
We suppose the data $g_\ell(t)$ has uniform random added noise of $\delta$
times the value.
Then the following experiments were constructed. 
\begin{equation*} 
\begin{aligned}
& E_{1a}:\quad &&q(\theta)=0.6+0.1\cos{\theta}+0.1\sin{2\theta}, \quad \theta_1=\frac{15}{32}\pi,\    
\theta_2=\frac{19}{16}\pi,\ \ \epsilon=\delta/2; 
	\\	
& E_{1b}:\quad &&q(\theta)=0.6+0.1\cos{\theta}+0.1\sin{2\theta}, \quad
\theta_1=\frac{3}{4}\pi,\phantom{1}\  \theta_2=\frac{55}{32}\pi,\ \ 
\epsilon=\delta/2.
\end{aligned}
\end{equation*}
Experiments $E_{1a}$ and $E_{1b}$
have the same exact radius function $q(\theta)$.
However, the locations of observation points are different and this leads
to the difference between reconstructions of these two experiments.
See Figure~\ref{e1ab} for an illustration of the fact that
the reconstructed domain $D$ depends strongly on the location
of the observation points.

The left figure here is with $1\%$ noise, but actually
even a significant change in the noise level
(5\% against 1\%) has little bearing in this respect, the former being
only slightly worse.
The change of the observation points in $E_{1b}$ shown in the middle
and rightmost figures makes an enormous difference here;
reconstructions are considerably improved.

\begin{figure}[th!]\label{e1ab}
	\center
\begin{subfigure}
  \centering
  \includegraphics[trim = 1.3cm .1cm 1.3cm .1cm, clip=true,height=5.75cm,width=5.75cm]{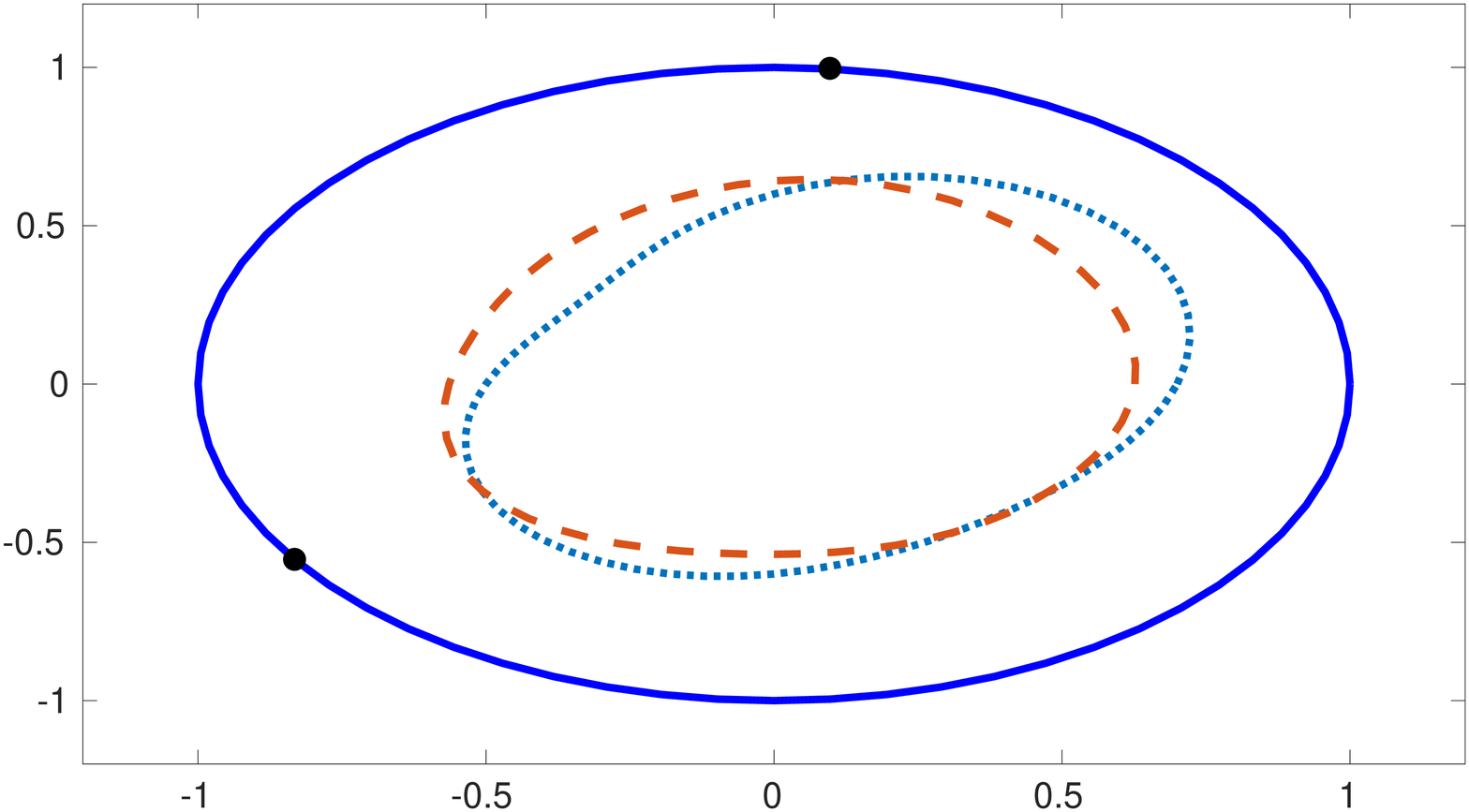}
\end{subfigure}
\begin{subfigure}
  \centering
  \includegraphics[trim = 1.1cm .1cm 1.3cm .1cm, clip=true,height=5.75cm,width=5.75cm] {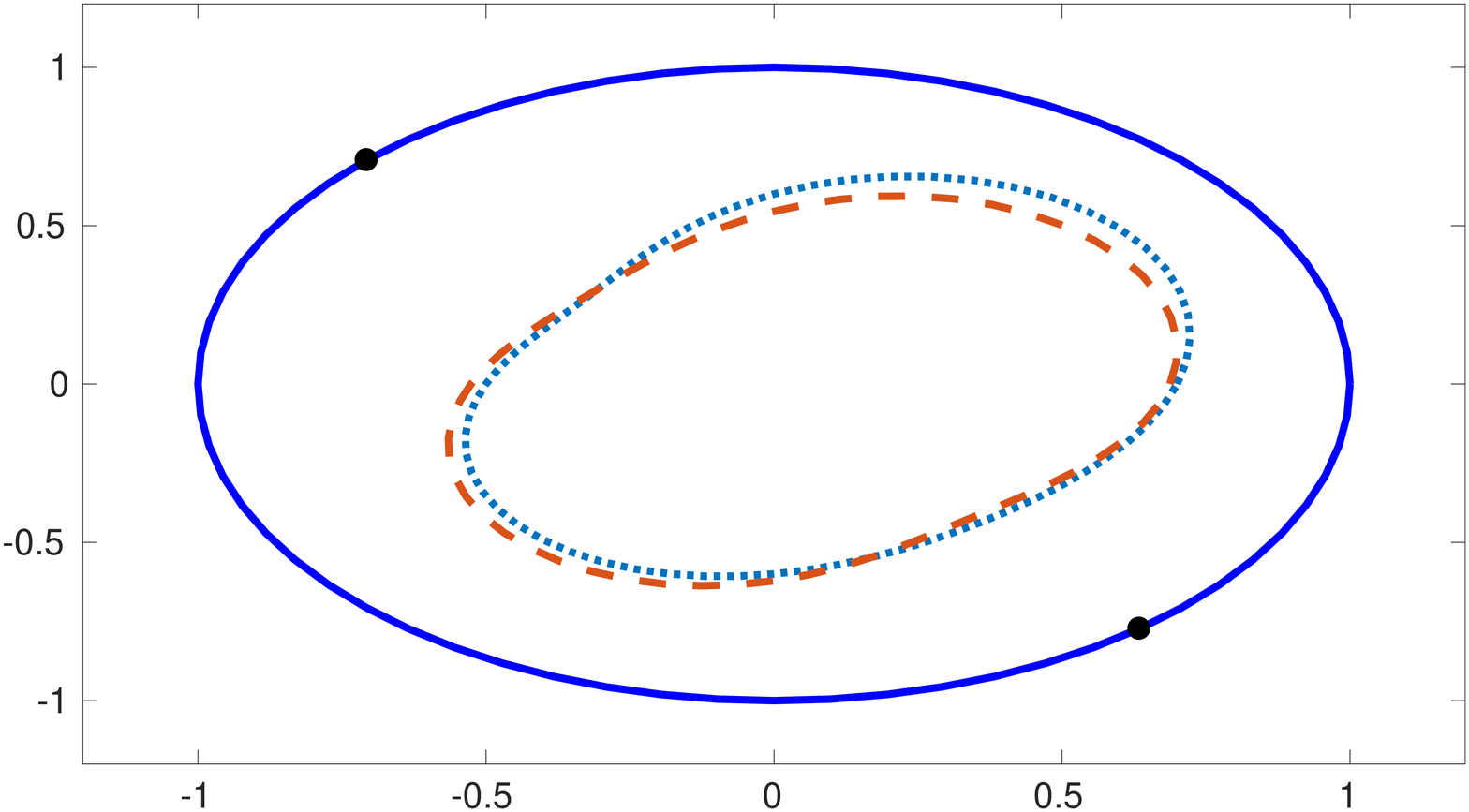}
\end{subfigure}
\begin{subfigure}
  \centering
  \includegraphics[trim = 1.1cm .1cm 1.3cm .1cm, clip=true,height=5.75cm,width=5.75cm] {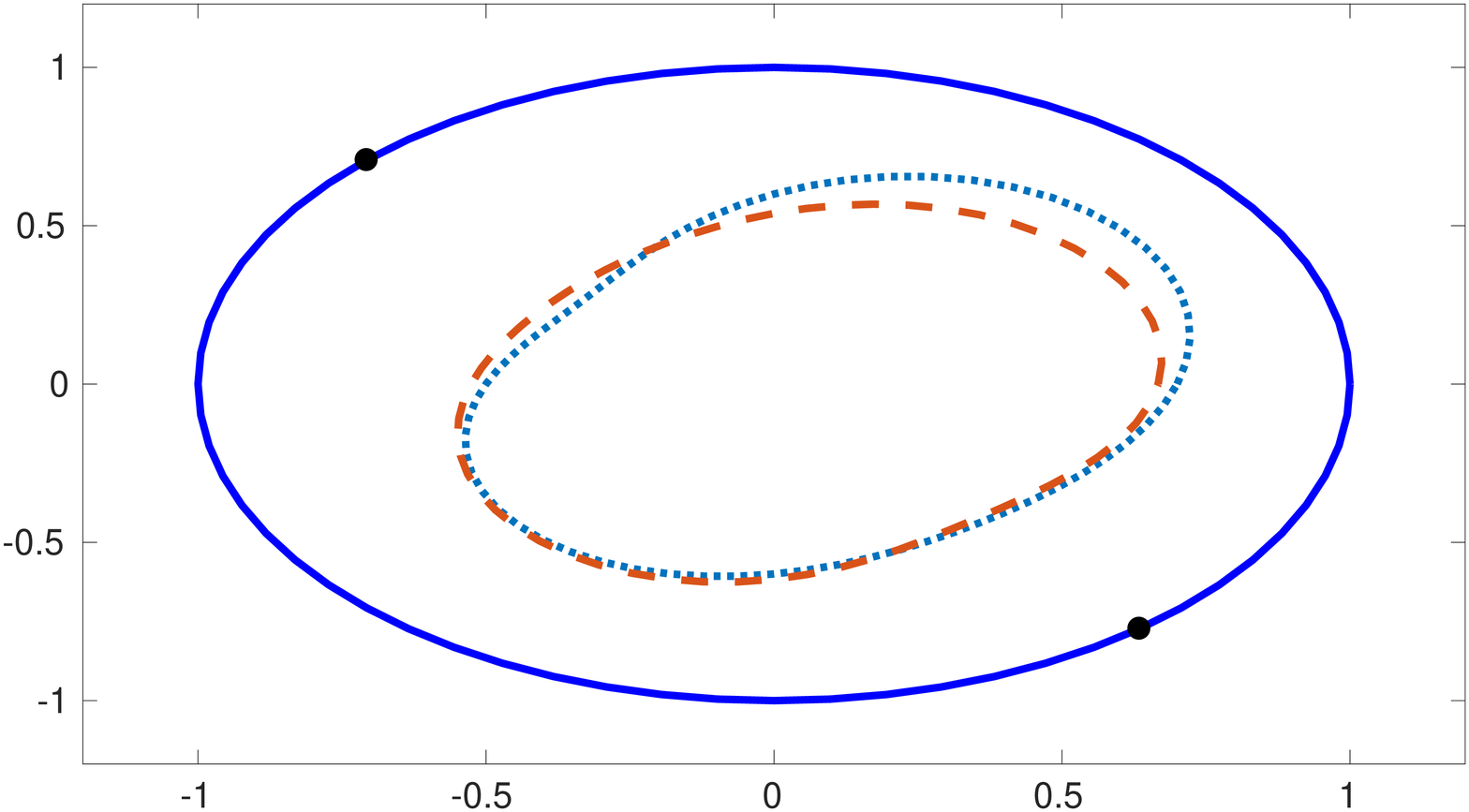}
\end{subfigure}
\vskip-20pt
\caption{\small Exact $q$ and numerical approximation\ \; ($\alpha=0.9$).
\\
Left: $E_{1a}$, $\delta=1\%$;\quad Middle:  $E_{1b}$, $\delta=1\%$;\quad
Right: $E_{1b}$, $\delta=5\%$.
}
\label{e1ab}
\end{figure}

This prompts us to redo this experiments to find the relation between
curve features and observation points in the reconstruction.
\begin{equation*} 
\begin{aligned}	
& E_{2a}:\quad &&q(\theta)=0.5+0.05\cos{\theta}+0.3\sin{2\theta},\quad
\theta_1=0,\quad \ \ 
\theta_2=\frac{31}{32}\pi,\ 
\epsilon=\delta/10; 
\\		
& E_{2b}:\quad &&q(\theta)=0.5+0.05\cos{\theta}+0.3\sin{2\theta},\quad
\theta_1=\frac{23}{32}\pi,\   
\theta_2=\frac{27}{16}\pi,\ 
\epsilon=\delta/10. 
\end{aligned}
\end{equation*}

The reconstruction pairs in Figure~\ref{e2ab} express the expected outcome;
both the proximity and alignment of the observation points are to the critical
features of the exact $q$, the better is the obtained approximation.

\begin{figure}[th!]\label{e2ab}
	\center
\begin{subfigure}
  \centering
\includegraphics[trim = .5cm .5cm .5cm .5cm, clip=true,height=5.5cm,width=5.75cm]
		{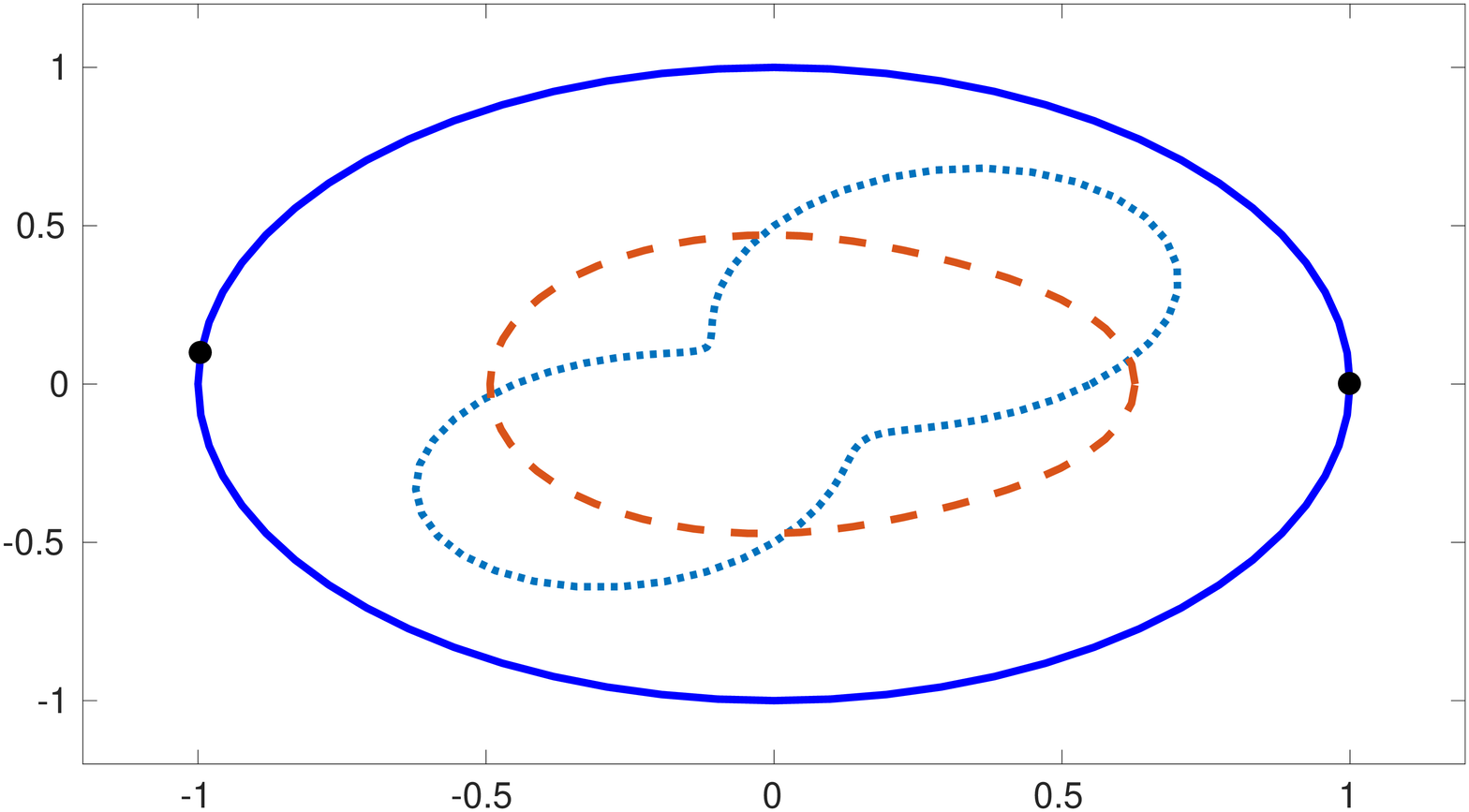}
\end{subfigure}
\begin{subfigure}
  \centering
\includegraphics[trim = .5cm .5cm .5cm .5cm, clip=true,height=5.5cm,width=5.75cm]
		{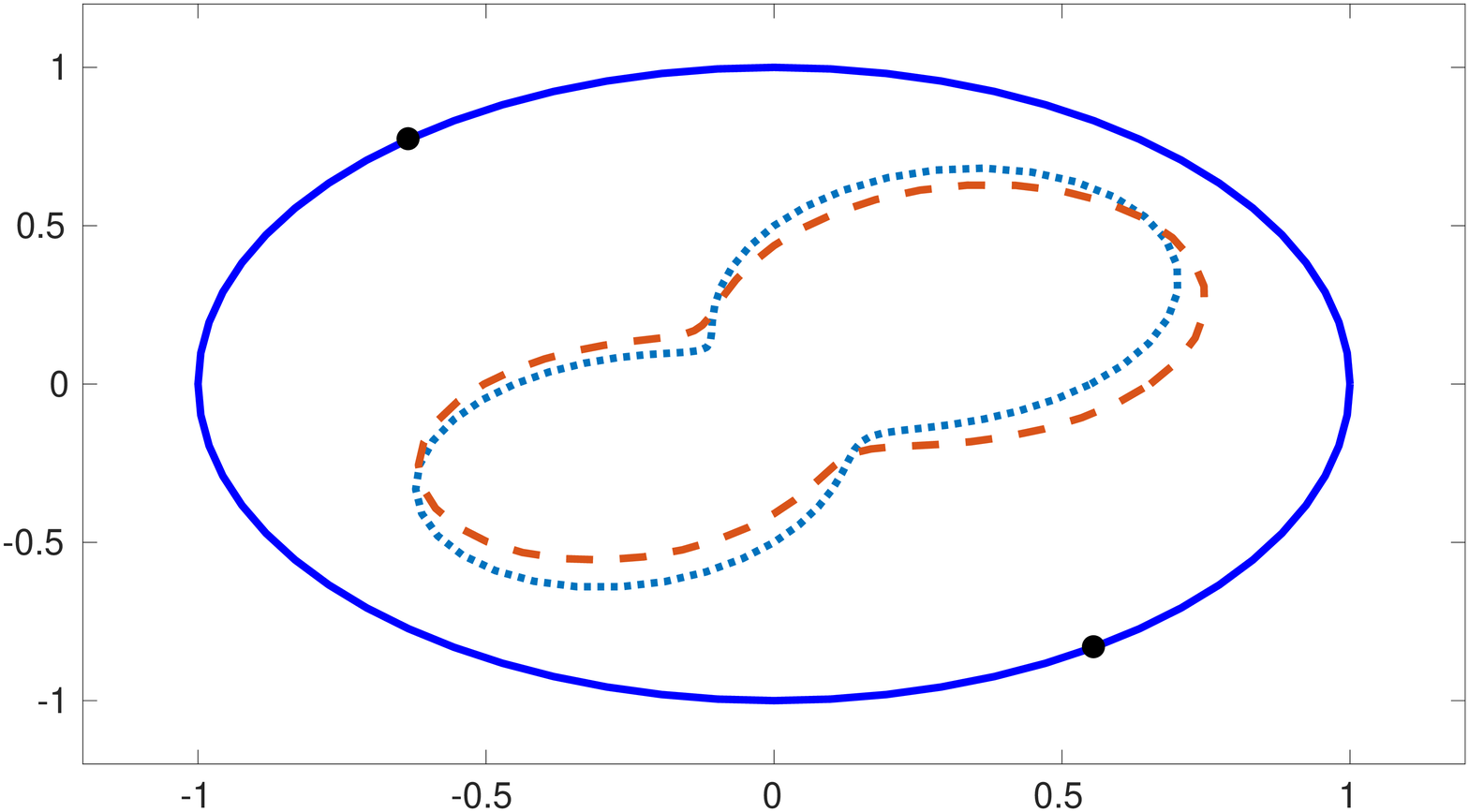}
\end{subfigure}
\vskip-15pt
\caption{\small Results of experiments $E_{2a}$ (left) and $E_{2b}$ (right),
\quad $\delta=1\%$, $\alpha=0.9$.}
\label{e2ab}
\end{figure}

A rigorous theoretical proof of this would be extremely useful but
the observation is widely reported in other situations.
For example, in inverse obstacle scattering there is a shadow region
on the reverse side of an incident wave from a given direction.
While all these problems do have strong diffusion and the theoretical
ability to ``wrap around'' obstacles, this is still limited.

\subsection{Fractional vs classical diffusion reconstructions}

An obvious question is how the reconstructions will depend on the
fractional diffusion parameter $\alpha$.
First we look at a profile of a typical data measurement $g(t)$ --
in this case for a circular inclusion with centre the origin.
\newbox\figinit
\newdimen\figlen  \figlen = 0.4\hsize

\setbox\figinit\vbox{\hsize=\figlen
{\footnotesize
\beginpicture
  \setcoordinatesystem units <0.5\figlen,5\figlen>
  \setplotarea x from 0 to 2.0, y from 0 to 0.2
  \axis bottom ticks numbered from 0 to 2 by 0.5 /
  \axis left ticks numbered from 0 to 0.2 by 0.05 /
\put {{\color{blue}$\alpha=1$}} [l] at 1.5 0.05
\put {{\color{red}$\alpha=1/2$}} [l] at 1.5 0.07
\linethickness=0.6pt
\put {{$E_{\alpha,1}(-\pi^2 t^\alpha)$}} [lt] at 0.02 2
\put {$t$} [rb] at 2.0 0.002
\setlinear
\setsolid
{\color{blue}\plot 
         0         0
    0.0200    0.0015
    0.0400    0.0030
    0.0600    0.0043
    0.0800    0.0055
    0.1000    0.0066
    0.1200    0.0076
    0.1400    0.0085
    0.1600    0.0093
    0.1800    0.0100
    0.2000    0.0106
    0.2200    0.0112
    0.2400    0.0117
    0.2600    0.0122
    0.2800    0.0126
    0.3000    0.0130
    0.3200    0.0133
    0.3400    0.0136
    0.3600    0.0139
    0.3800    0.0141
    0.4000    0.0143
    0.4200    0.0145
    0.4400    0.0147
    0.4600    0.0148
    0.4800    0.0150
    0.5000    0.0151
    0.5200    0.0152
    0.5400    0.0153
    0.5600    0.0154
    0.5800    0.0154
    0.6000    0.0155
    0.6200    0.0156
    0.6400    0.0156
    0.6600    0.0157
    0.6800    0.0157
    0.7000    0.0157
    0.7200    0.0158
    0.7400    0.0158
    0.7600    0.0158
    0.7800    0.0159
    0.8000    0.0159
    0.8200    0.0159
    0.8400    0.0159
    0.8600    0.0159
    0.8800    0.0159
    0.9000    0.0159
    0.9200    0.0160
    0.9500    0.0160
    1.0000    0.0160
    1.2500    0.0160
    1.5000    0.0160
    1.7500    0.0160
    2.1000    0.0160
/ }\relax
\setdots <2pt>
{\color{red}\plot
         0         0
    0.0200    0.0080
    0.0400    0.0095
    0.0600    0.0104
    0.0800    0.0110
    0.1000    0.0114
    0.1200    0.0118
    0.1400    0.0120
    0.1600    0.0122
    0.1800    0.0124
    0.2000    0.0126
    0.2200    0.0127
    0.2400    0.0129
    0.2600    0.0130
    0.2800    0.0131
    0.3000    0.0132
    0.3200    0.0133
    0.3400    0.0133
    0.3600    0.0134
    0.3800    0.0135
    0.4000    0.0135
    0.4200    0.0136
    0.4400    0.0136
    0.4600    0.0137
    0.4800    0.0137
    0.5000    0.0138
    0.5200    0.0138
    0.5400    0.0139
    0.5700    0.0139
    0.6000    0.0140
    0.6400    0.0140
    0.6800    0.0141
    0.7100    0.0141
    0.7500    0.0142
    0.7800    0.0142
    0.8000    0.0142
    0.8400    0.0143
    0.8500    0.0143
    0.9000    0.0143
    0.9500    0.0144
    1.0000    0.0144
    1.1000    0.0145
    1.2000    0.0146
    1.3000    0.0146
    1.4000    0.0147
    1.5000    0.0147
    1.6000    0.0148
    1.7000    0.0148
    1.8000    0.0148
    1.9000    0.0149
    2.0000    0.0149
/ }\relax
\endpicture
}
}

{ 
\begin{wrapfigure}{r}{0.35\textwidth}
  \begin{center}
\vskip-25pt
    \includegraphics[trim = 0.5cm 0.3cm .6cm .4cm, clip=true,height=5cm,width=6cm]{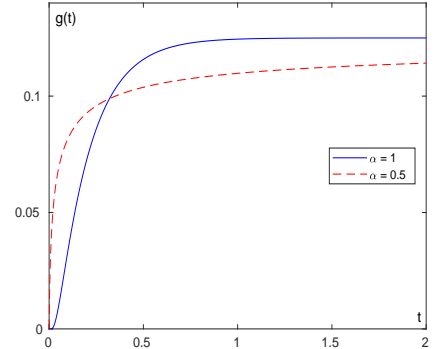}
  \end{center}
\vskip-15pt
  \caption{\small{ The data $g_\alpha(t)$.}}
\label{f0}
\end{wrapfigure}
Figure~\ref{f0} shows the function $g(t)$ for both $\alpha=1$
and $\alpha=\frac{1}{2}$.
In each case $g(t)$ goes to the same steady state value but how it
approaches is quite different.
In the case of the heat equation the effective steady state is reached
long before the endpoint chosen here of $T=2$.
Indeed, by $t=0.5$, 99\% of the steady state value has been achieved and is
typical of the behaviour expected by the exponential term in the solution
representation when $\alpha=1$.
When $\alpha=\frac{1}{2}$ the situation is quite different;
the Mittag-Leffler function decays only linearly for large (negative)
values of the argument and so steady state is achieved much more slowly.
In consequence, for $\alpha=1$ only time measurements made for small $t$
offer any utility in providing information, but for $\alpha<1$ this is
not the case.

}
The model \eqref{FDE} has the positivity property; the nonhomogeneous
forcing function and initial value are nonnegative and this implies the
solution $u(x,t)$ be nonnegative for all $(x,t)$,
see \cite{LiuRundellYamamoto:2016}.
Thus the (exact) overposed flux values consisting of the outer
normal derivative on $\partial\Omega$ will be negative for all $t$.
In fact these values must start at $0$ and monotonically decrease to the
steady state value predicted by the equation $-\triangle u = \chi(D)$
with the same Dirichlet condition on $\partial\Omega$ as imposed by
\eqref{FDE}.
From equation \eqref{eqn:spectral_rep} and the monotonicity of the
Mittag-Leffler function on the negative real axis the term
$\sigma_{\alpha,n}(t) := 1 - E_{\alpha,1}(-\lambda_n t^\alpha)$
is monotone and the range of this is within $[0,1)$ for all $t$.
Even if the time interval is truncated to $[0,T]$, since $\lambda_n\to\infty$
linearly in $n$, most of the modes will have the property that
$\sigma_{\alpha,n}(t)$ covers a substantial part of the range $(0,1]$.
However, this will not be independent of $\alpha$ as the growth of
$E_{\alpha,1}(-\lambda t^\alpha)$ depends on $\alpha$.
The larger the $\alpha$, the initially the slower, but finally the faster
the decay of $E_{\alpha,1}(-\lambda t^\alpha)$ to zero.
Thus, as we have seen in Figure~\ref{f0},
 the heat equation with $\alpha=1$ will reach steady state faster
than for $\alpha<1$ and the smaller the $\alpha$ the longer it will take to
reach steady state.
Of course the high frequency modes (large $\lambda_n$) will reach steady state
much faster and this is true for all $\alpha$.


{ 
\begin{wrapfigure}{r}{0.35\textwidth}
  \begin{center}
\vskip-25pt
    \includegraphics[trim = 0.5cm 0.4cm .6cm .3cm, clip=true,height=6cm,width=6cm]{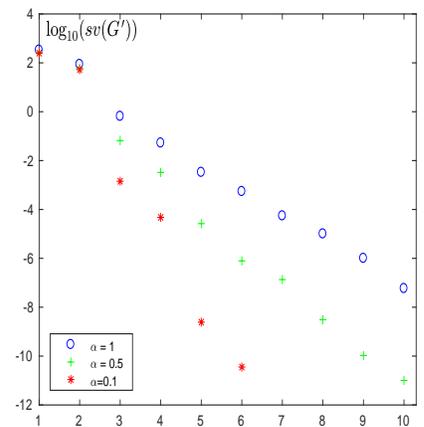}
  \end{center}
\vskip-15pt
  \caption{\small{ Singular values of $G'$.}}
\label{svalues}
\end{wrapfigure}
Figure~\ref{svalues} displays the singular values $\sigma_k$ of the operator
$(G')^*\circ G'$ for experiment $E_{2b}$. 
Note the obvious exponential decay of $\sigma_k$ for all $\alpha$.
This is to be expected due to the extreme ill-conditioning of the problem.
However, the rates do depend on $\alpha$; the smaller the $\alpha$
the greater the decay rate and hence degree of ill-conditioning.
Again, this must be expected as for small $\alpha$ the diffusion
is initially extremely rapid and the transient information cannot be
adequately captured.
Thus, while all cases require $g(t)$ for small values of $t$ this is
even more important the smaller the $\alpha$.
The slower growth of the profile $g(t)$ for larger $t$ cannot compensate.
Although this seems anomalous at first glance, the factor
$1- E_{\alpha,1}(-z)$ for large argument $z = \lambda_n t^\alpha$ approaches
unity with behaviour $\frac{c_1}{z} + \frac{c_2}{z^2} + \ldots$
where $c_k=c_k(\alpha)$.
Hence for modest values of $t$, say near $t=1$ but large $\lambda_n$
this is dominated by the first term with a rapidly diminishing contribution
to further terms $1/z^2$, $1/z^3\ \ldots$ and so also offers very little
information to be picked up from $g(t)$.

}
Note that while it is important to take a small step size initially in the
measurement of $g(t)$ this need not be continued for the entire interval.
Thus if we take say the first few measurements with $dt=0.001$ then
this can be steadily increased so that (say) over the last half of $[0,T]$ we
use a step size of $dt=0.1$; with this the reconstructions  differences
will be imperceptible.
In fact, the optimal measurement points $\{t_k\}$ should be chosen to
give approximately equal arc lengths of $u_r(1,\theta,t)=g(t)$.
This will mean a far greater concentration of point for small values of $t$
and this effect will be stronger the smaller the $\alpha$ value.

Reconstructions are shown for experiments $E_{1b}$ and $E_{2b}$ 
and for $\alpha = 0.1,\,0.5,\,1$ in Figure~\ref{e_diff_alpha}.
Here we took the initial step size in $t$ to be $dt=0.001$.

\begin{figure}[th!]\label{e_diff_alpha}
	\center
\begin{subfigure}
  \centering
\includegraphics[trim = .5cm .5cm .5cm .5cm, clip=true,height=5.5cm,width=5.75cm]{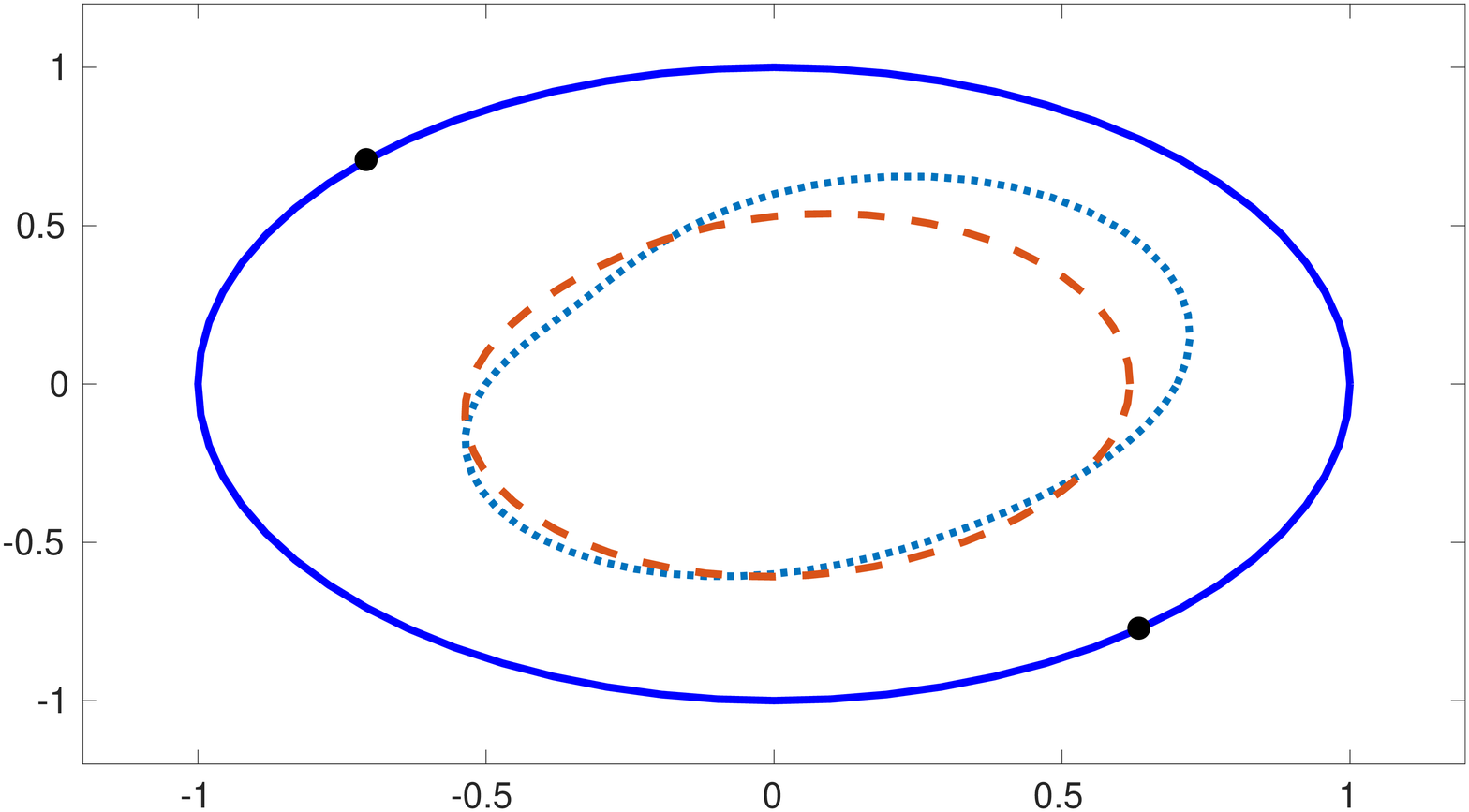}
\end{subfigure}
\begin{subfigure}
  \centering
\includegraphics[trim = .5cm .5cm .5cm .5cm, clip=true,height=5.5cm,width=5.75cm]{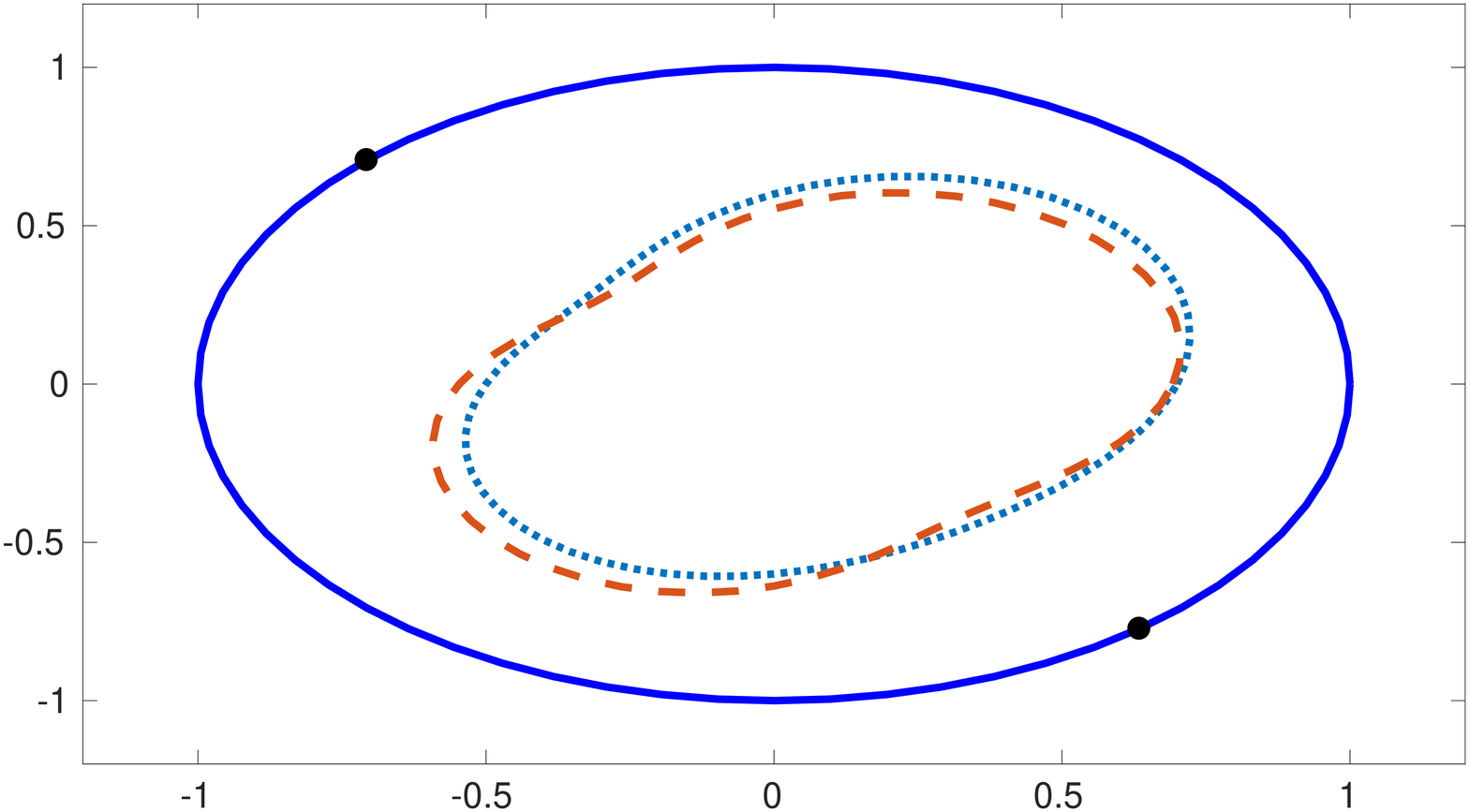}
\end{subfigure}
\begin{subfigure}
  \centering
\includegraphics[trim = .5cm .5cm .5cm .5cm, clip=true,height=5.5cm,width=5.75cm]{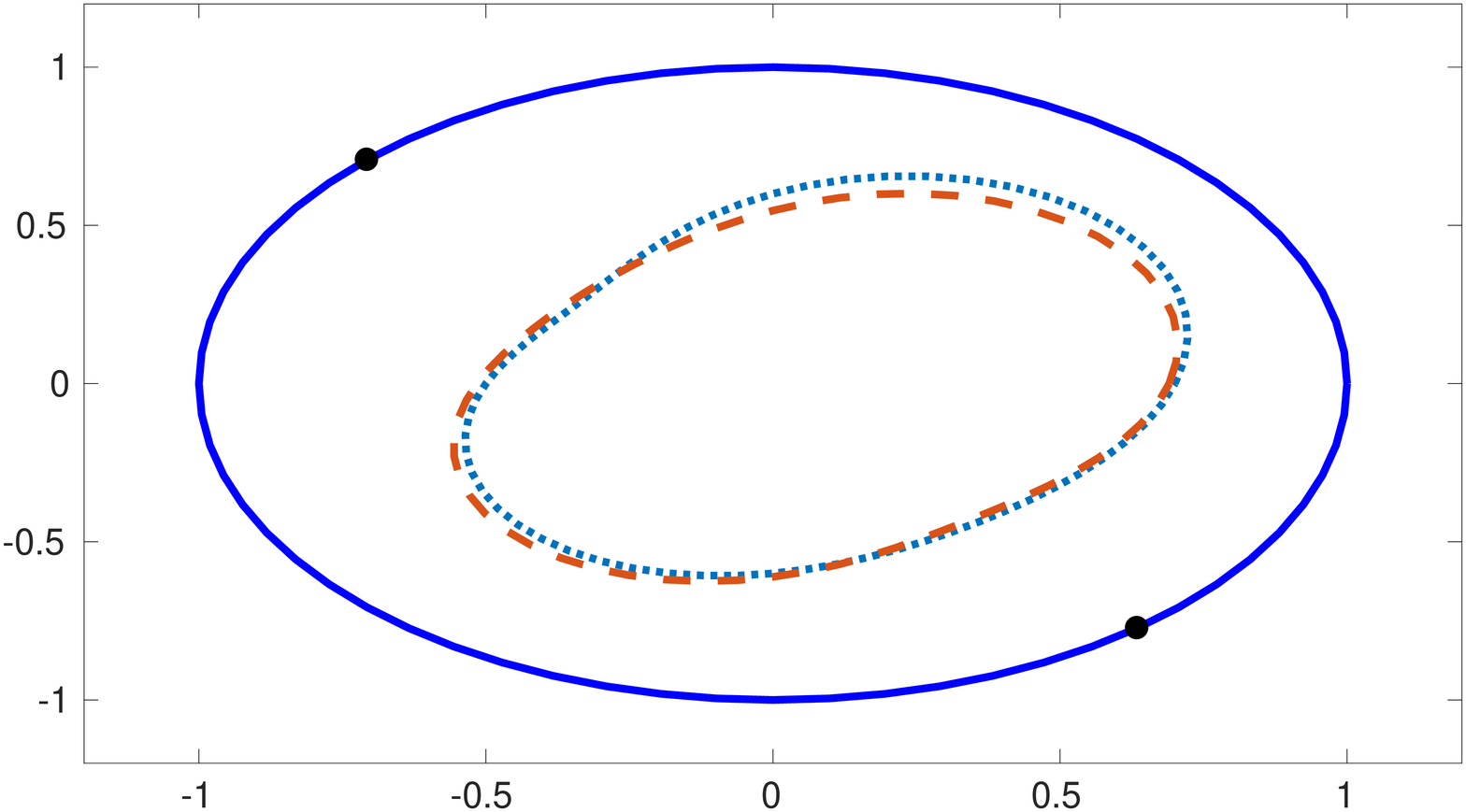}
\end{subfigure}
\vskip-15pt
\begin{subfigure}
  \centering
\includegraphics[trim = .5cm .5cm .5cm .5cm, clip=true,height=5.5cm,width=5.75cm] {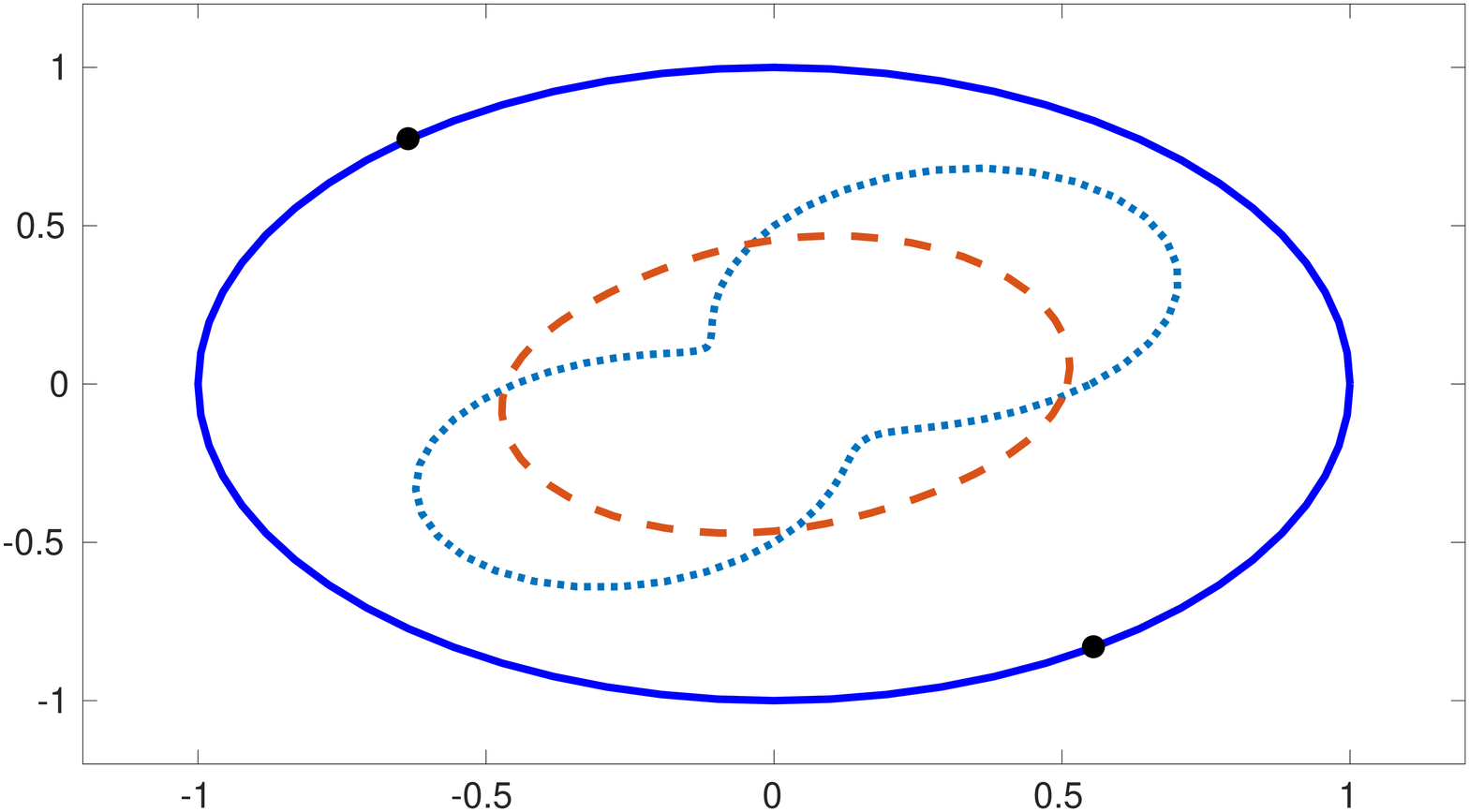}
\end{subfigure}
\begin{subfigure}
  \centering
\includegraphics[trim = .5cm .5cm .5cm .5cm, clip=true,height=5.5cm,width=5.75cm] {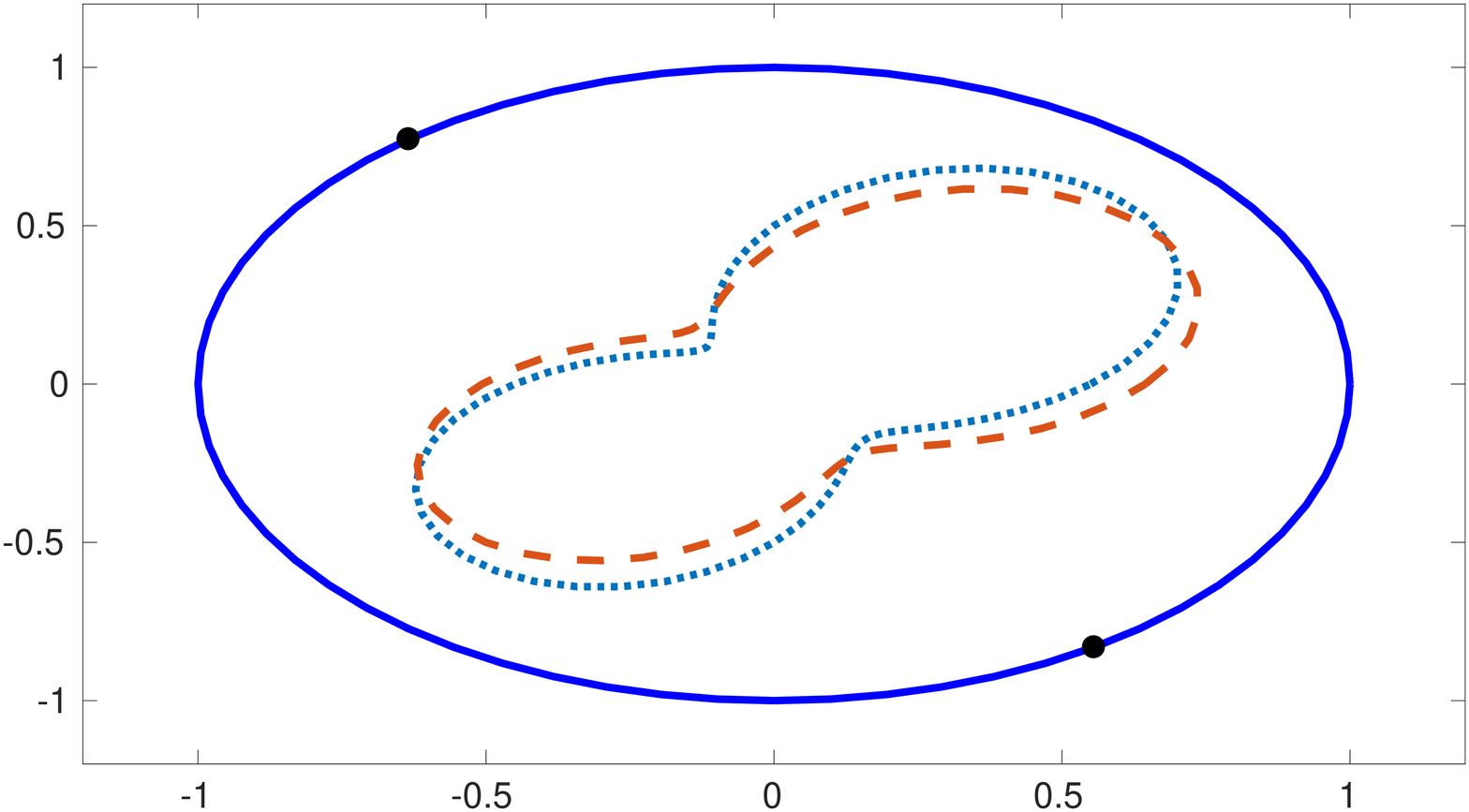}
\end{subfigure}
\begin{subfigure}
  \centering
\includegraphics[trim = .5cm .5cm .5cm .5cm, clip=true,height=5.5cm,width=5.75cm] {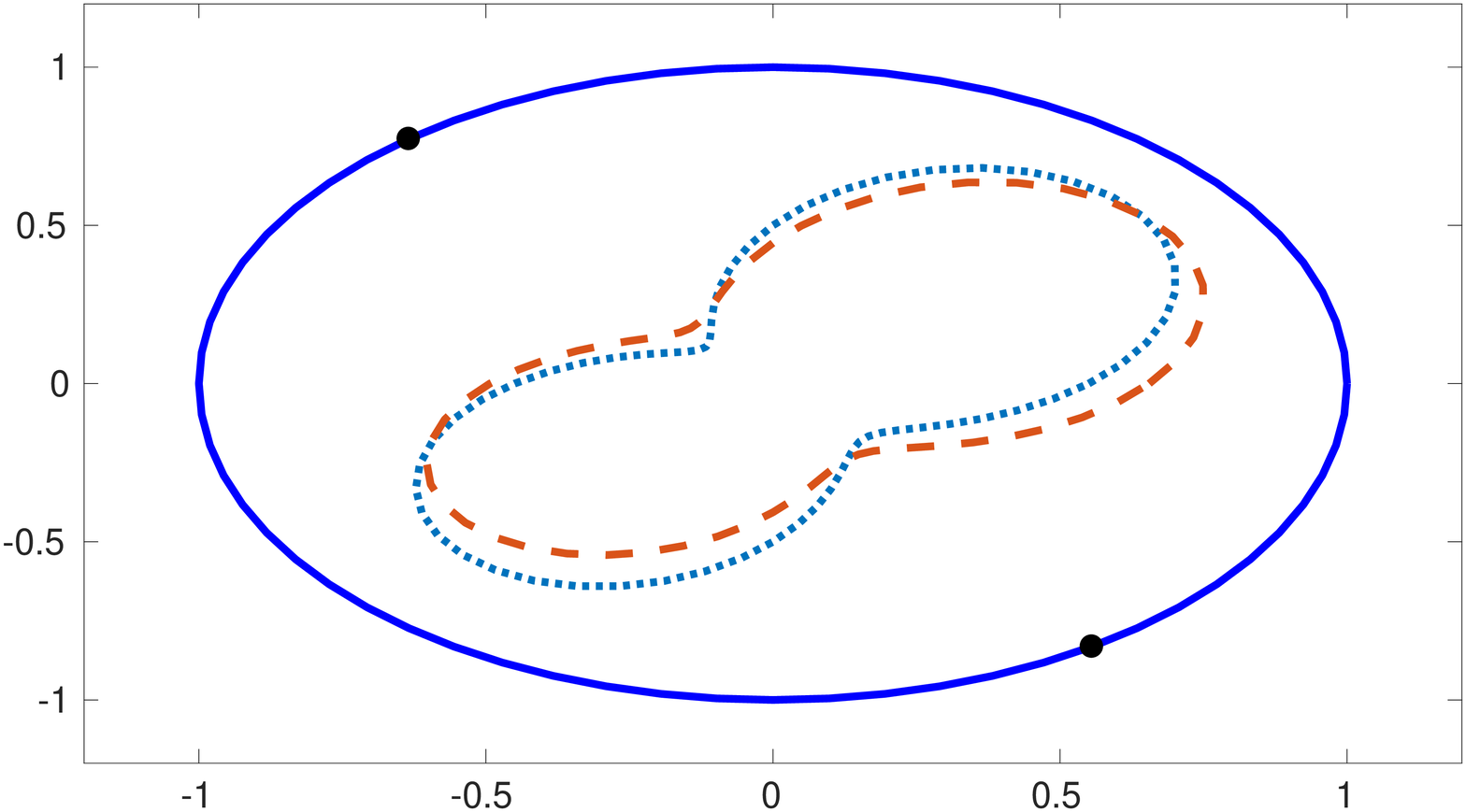}
\end{subfigure}
\vskip-20pt
\caption{\small $E_{1b}$ (top) and $E_{2b}$ (bottom)
for $\alpha = 0.1,\;0.5,\;1$.
  $\;\delta=1\%$.}
\label{e_diff_alpha}
\end{figure}

These bear out the previous observations and with Figure~\ref{svalues}.
The differences are relatively small for $\alpha$ close to $1$
but with a rapid deterioration, particularly in the higher frequency
information, with decreasing $\alpha$.
Thus in $E_{1b}$ the simple shaped-object has a virtually identical
reconstruction for $\alpha=1/2$ and $\alpha=1$ - both within the
variation expected with $1\%$, noise but the reconstruction is clearly
poorer for $\alpha=0.1$ where we are only able to determine the rough size 
and placement.
The similarity in $E_{2b}$ is due to the small initial time steps taken;
if instead we had to increase
$dt$ to $dt=0.01$ initially, then the difference between $\alpha=1/2$ and 
$\alpha=1$ would be much more evident.

What if we delay the flux measurements until a later time,
that is we measure only over $[T_0,T]$ for some $T_0>0$?
There are certainly physical situations where this might be required.
Note that Corollary~\ref{cut} indicates uniqueness will still hold
but the question is the resulting change in condition number.
In Figures~\ref{fig_cut}--\ref{e2b_cut_right}
we measure the flux data $g_\ell(t)$
over incomplete intervals.

Figure~\ref{fig_cut} shows the expected outcome; a decrease in the ability
to construct higher modes as short-time information is lost.
\begin{figure}[th!]\label{fig_cut}
	\center
\begin{subfigure}
  \centering
\includegraphics[trim = .8cm .9cm .3cm .3cm, clip=true,height=5.5cm,width=5.75cm]{e2b.eps}
\end{subfigure}
\begin{subfigure}
  \centering
\includegraphics[trim = .7cm .8cm .2cm .3cm, clip=true,height=5.5cm,width=5.75cm]{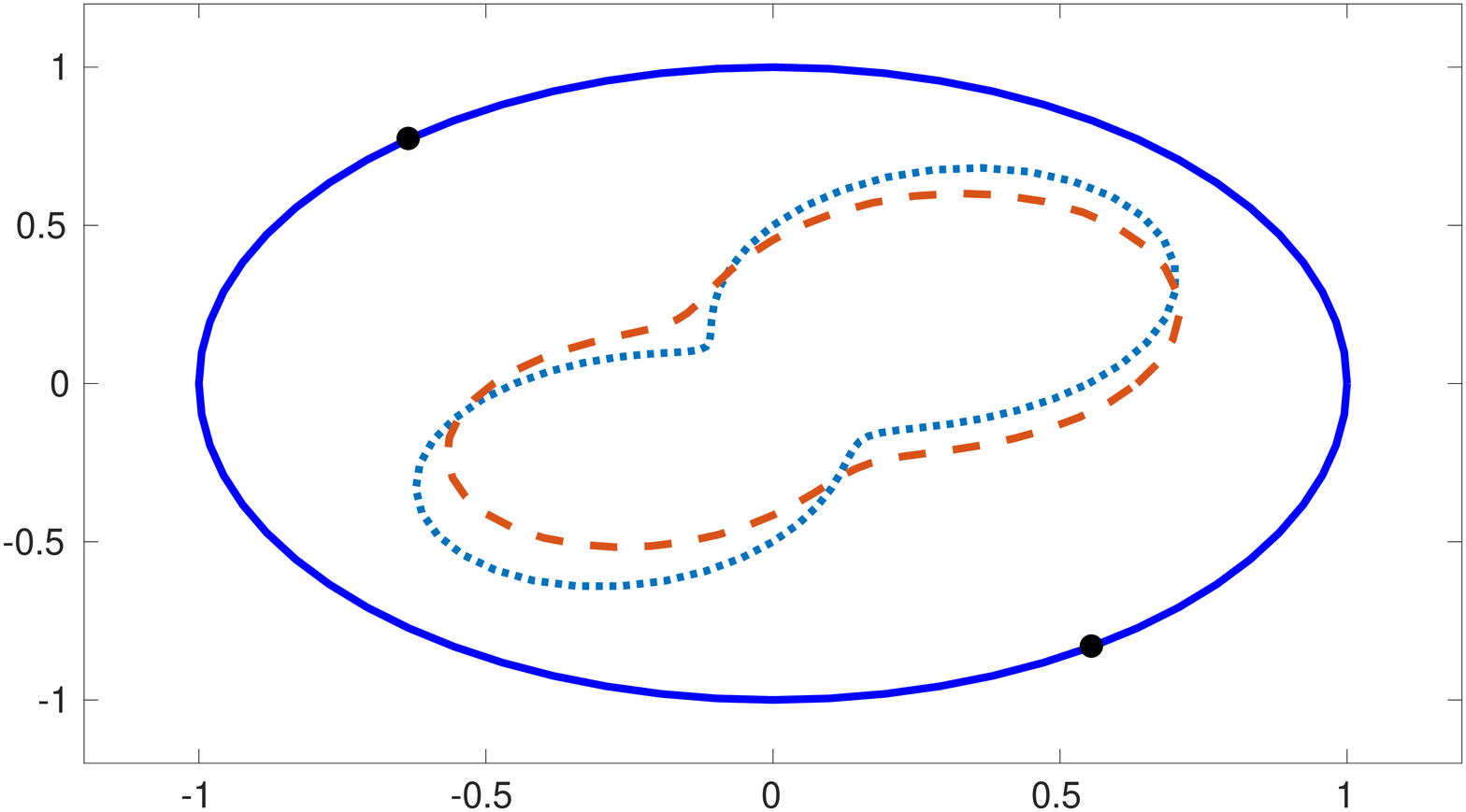}
\end{subfigure}
\begin{subfigure}
  \centering
\includegraphics[trim = .7cm .8cm .2cm .3cm, clip=true,height=5.5cm,width=5.75cm]{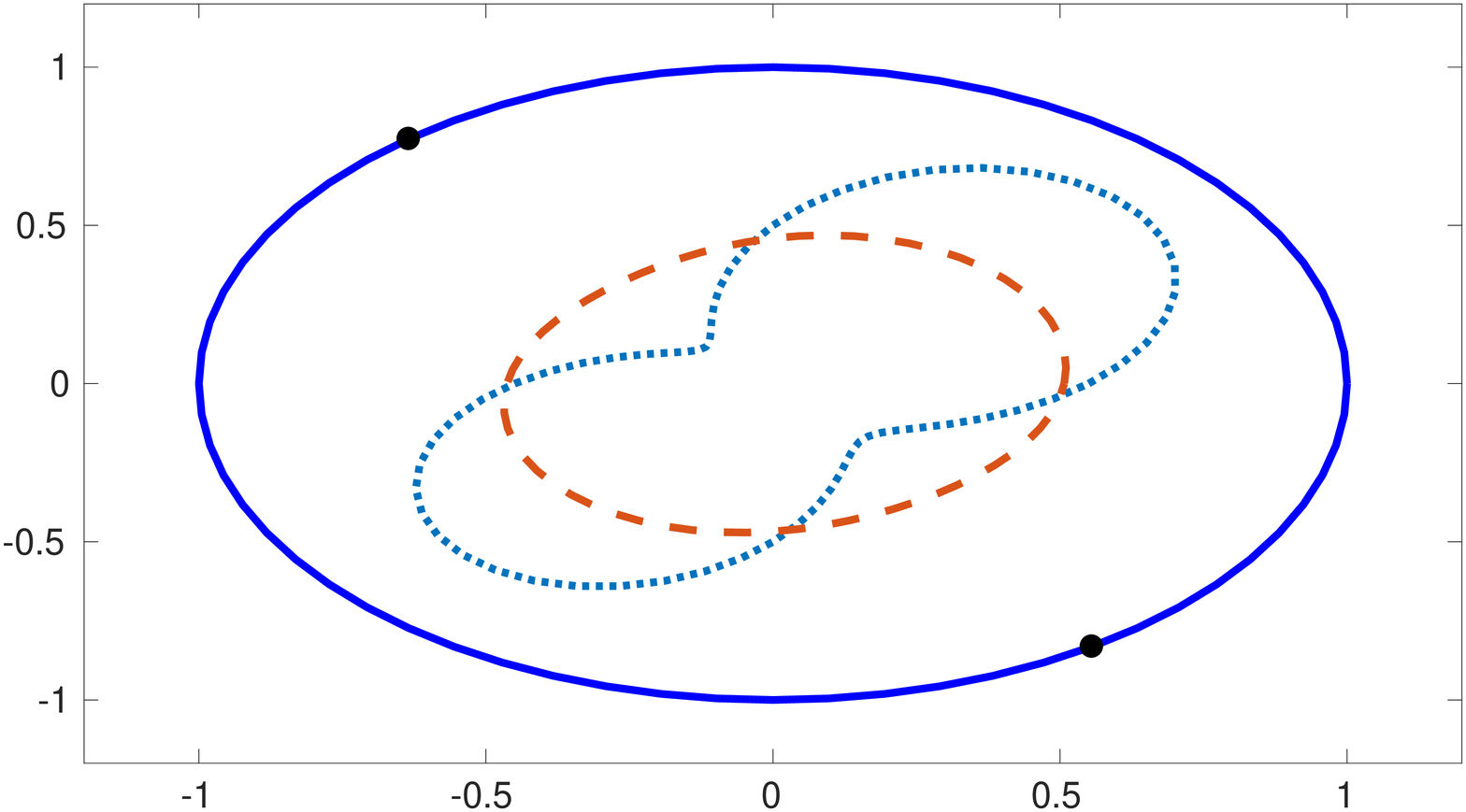}
\end{subfigure}
\vskip-18pt
\caption{\small Reconstructions for $E_{2b}$ with data from $[T_0,T]$ with
$\;T_0=0$, $\,T_0=0.25$, $\,T_0=0.5$,
$\;\alpha = 0.9$.}
\label{fig_cut}
\end{figure}

Figure~\ref{e2b_cut_left} shows how this loss is greater for smaller $\alpha$
as should be expected from the above.
\begin{figure}[th!]\label{e2b_cut_left}
	\center
\begin{subfigure}
  \centering
\includegraphics[trim = .5cm .5cm .5cm .5cm, clip=true,height=5.5cm,width=5.75cm]
		{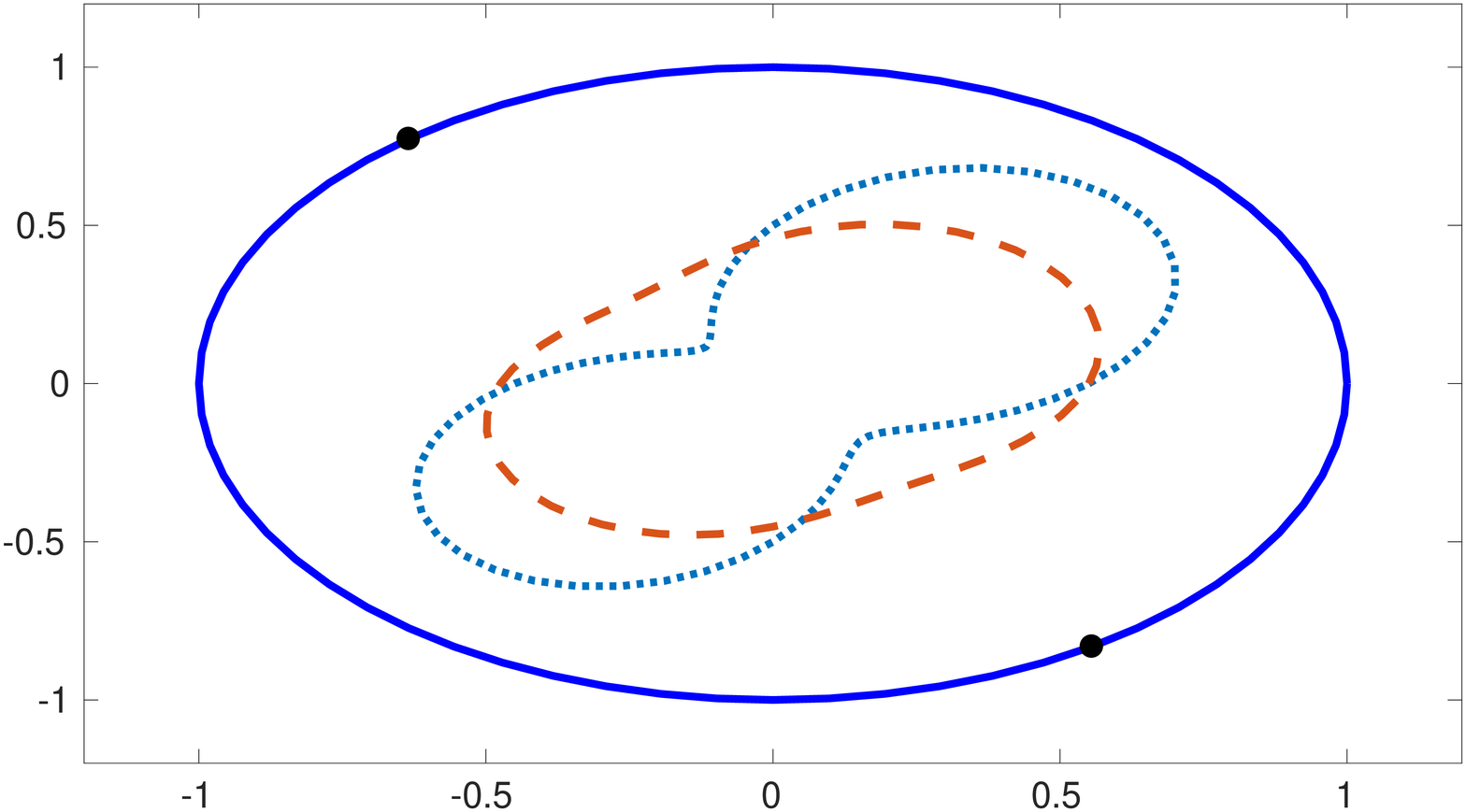}
\end{subfigure}
\begin{subfigure}
  \centering
\includegraphics[trim = .5cm .5cm .5cm .5cm, clip=true,height=5.5cm,width=5.75cm]
		{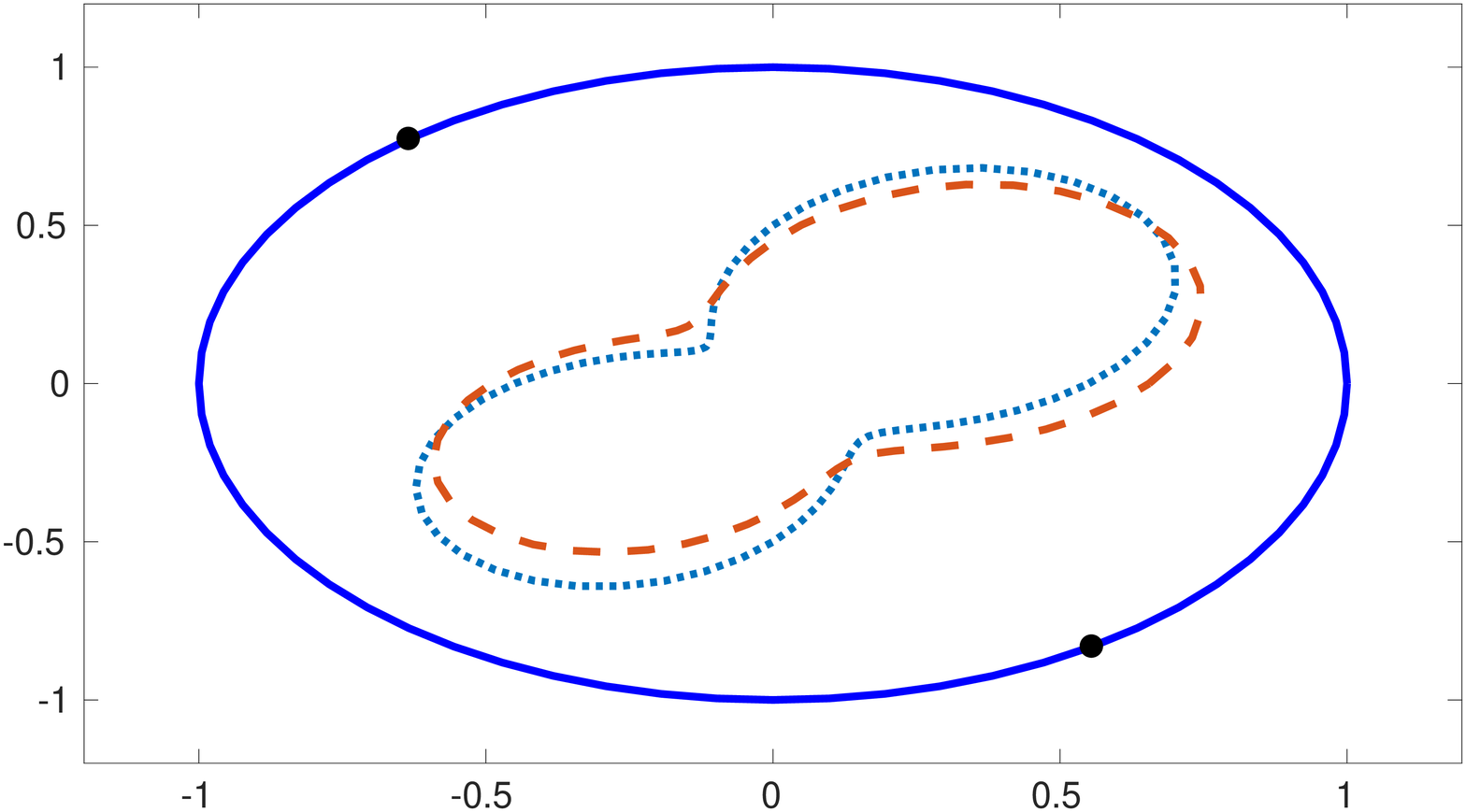}
\end{subfigure}
\vskip-15pt
\caption{\small Results of experiments $E_{2b}$ with data from $[0.25,1]$, $\delta=1\%$ and $\alpha=0.5$ (left), 
$\alpha=1$ (right).}
\label{e2b_cut_left}
\end{figure}

Figure~\ref{e2b_cut_right} shows that when larger time values are missing
the effect is greater for larger $\alpha$ and in particular, for the heat
equation. This is again consistent with the above analysis
and the fact that although the fractional diffusion takes longer to reach
steady state, the later stages of the transient phase contains little
information that can be used to reconstruct the source domain.
\begin{figure}[th!]\label{e2b_cut_right}
	\center
\begin{subfigure}
  \centering
\includegraphics[trim = .5cm .5cm .5cm .8cm, clip=true,height=5.5cm,width=5.75cm]
		{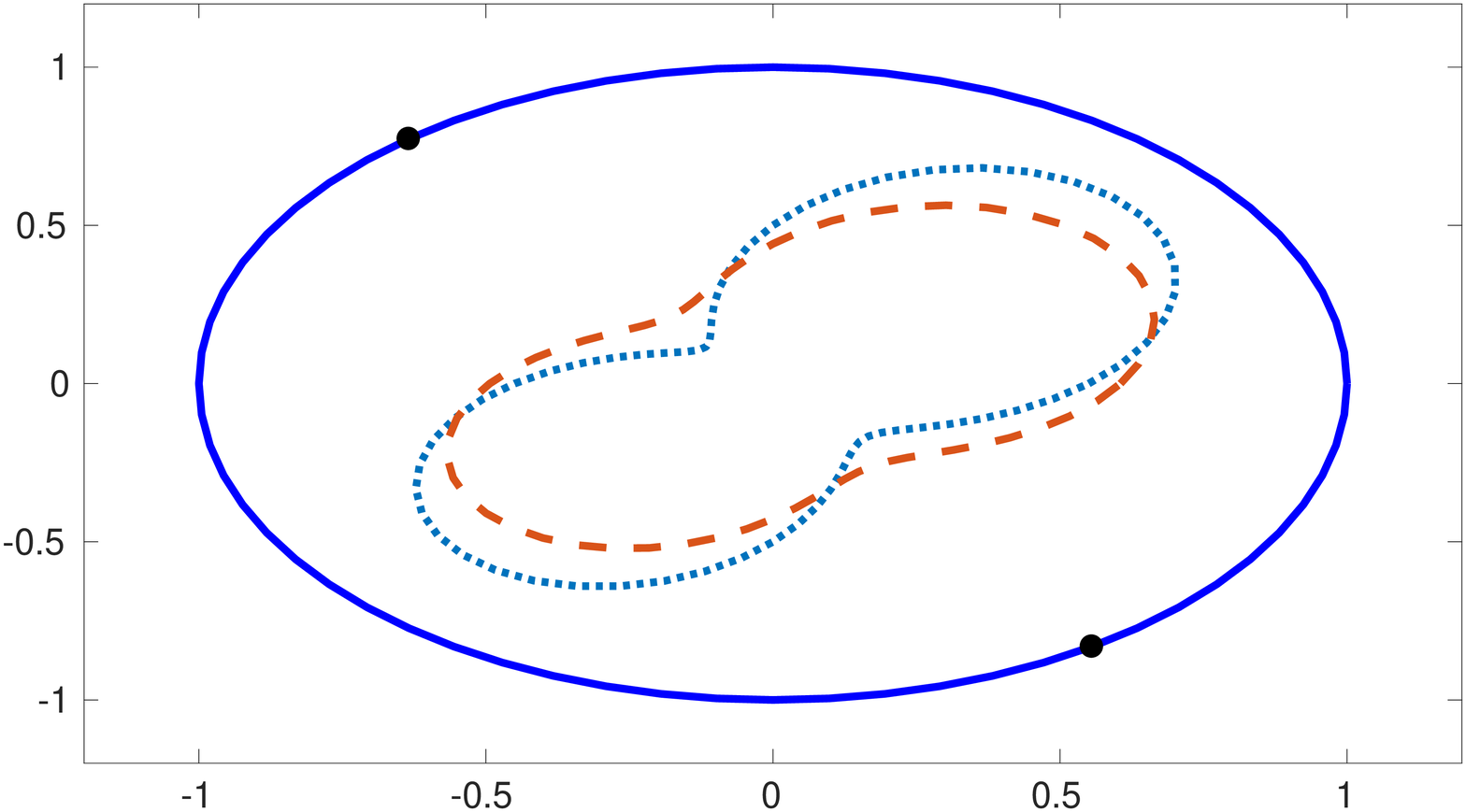}
\end{subfigure}
\begin{subfigure}
  \centering
\includegraphics[trim = .5cm .5cm .5cm .8cm, clip=true,height=5.5cm,width=5.75cm]
		{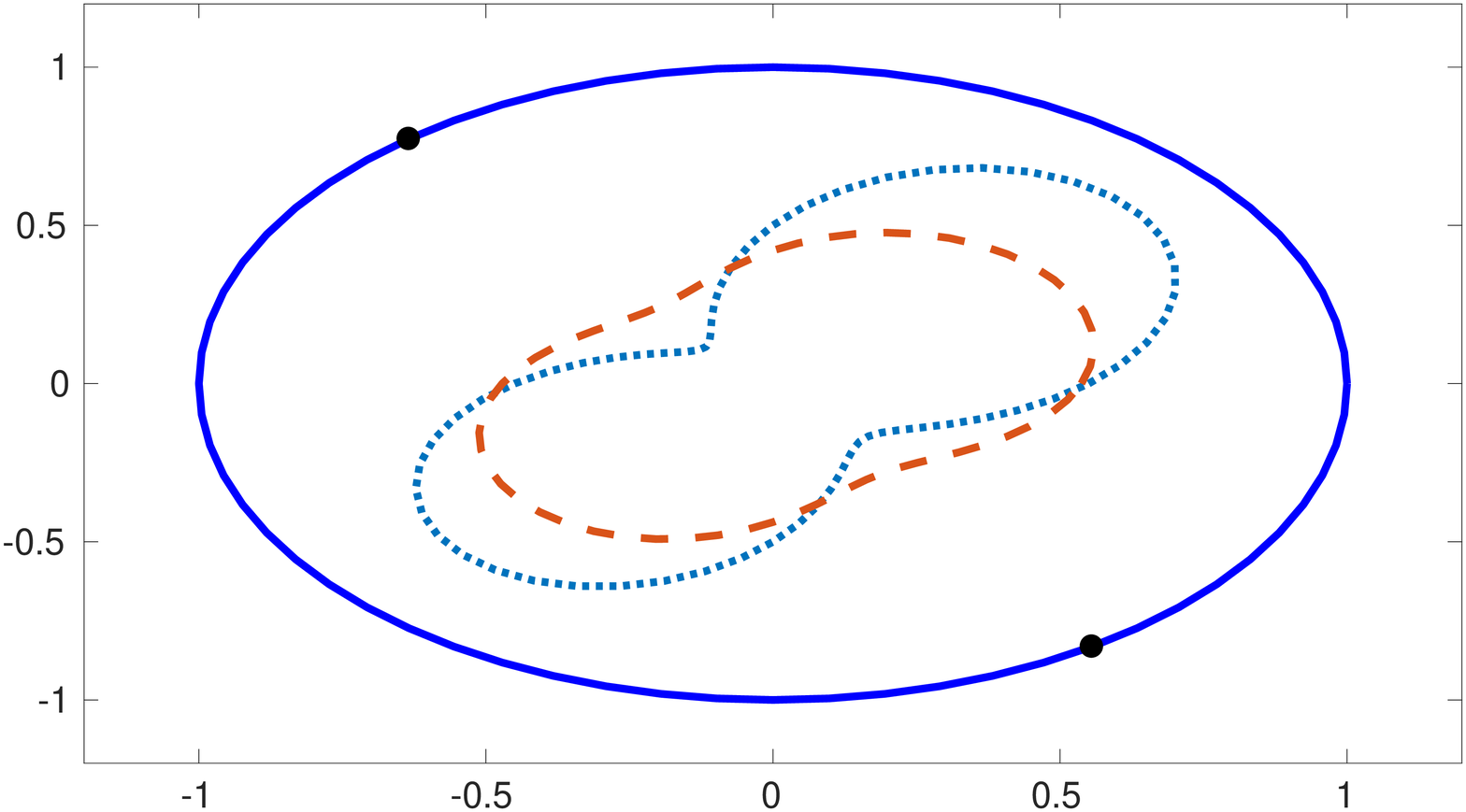}
\end{subfigure}
\vskip-15pt
\caption{\small Results of experiments $E_{2b}$ with data from $[0,0.15]$, $\delta=1\%$ and $\alpha=0.5$ (left), 
$\alpha=1$ (right).}
\label{e2b_cut_right}
\end{figure}

The explanation is clear from \eqref{eqn:spectral_rep}
and perhaps more apparent with the heat equation and the resulting exponential
function $E_{1,1}$ although the identical argument applies to the
Mittag-Leffler function $E_{\alpha,1}$ albeit to a slightly different degree.
For the term $e^{-\lambda_n t}$ to remain sufficiently large to contain
extractable information we require the argument $\lambda_n t$ to be
sufficiently small.
If $\lambda_n < \Lambda$ and $t>T_0$ then
$e^{-\lambda_n t} < e^{-\Lambda T_0} < \epsilon$ for
$\Lambda < -\ln(\epsilon)/T_0$ showing that for a given $\epsilon$
and value $T_0$ we are restricted to a maximum $\Lambda$;
that is we cannot effectively use the $n^{\rm th}$ eigenfunction mode
in equation~\eqref{eqn:spectral_rep} if $\lambda_n>\Lambda$.

In summary, the optimal time-measurement intervals for recovering the
source support $D$ in \eqref{FDE} depend strongly on $\alpha$.
Taking small initial time steps is advantageous in all cases but particularly
important the smaller the value of $\alpha$.

\subsection{More than two measurement points}

We should expect superior reconstructions with a greater number
of observation points since we have additional data for which
to average out measurement error.
However, \eqref{eqn:m-theta_cond} shows much more is possible since
we see that if the difference
$\theta_i-\theta_j$
is near to a rational number $\frac{p}{r}$ times $\pi$ with some $r\leq M$,
then the $r^{\rm th}$ mode will be expressed very poorly from this combination.
For a given $M$, the more observation points taken, the greater the
opportunity to avoid this situation.
This allows an often significant increase in the resulting singular values
and correspondingly a better inversion of $G'$ and hence of the reconstruction.

In experiment $E_{2c}$, we use four observation points.
\begin{equation*} 
\begin{aligned}
{ } &q(\theta)=0.5+0.05\cos{\theta}+0.3\sin{2\theta}, \\
E_{2c}:\qquad 
&\theta_1=\frac{23}{32}\pi,\ \ 
\theta_2=\frac{57}{32}\pi,\ \ 
\theta_3=\frac{1}{4}\pi,\ \ 
\theta_4=\frac{39}{32}\pi, \\
&\beta=3\times10^{-2},\ \delta = 1\%, \ \epsilon=\delta/10.
\end{aligned}
\end{equation*} 
\begin{figure}[th!]\label{e2_multiple_observations}
	\center
\begin{subfigure}
  \centering
\includegraphics[trim = .5cm .5cm .5cm .3cm, clip=true,height=5.5cm,width=5.75cm]
		{e2b.eps}
\end{subfigure}
\begin{subfigure}
  \centering
\includegraphics[trim = .2cm .7cm .5cm .2cm, clip=true,height=5.5cm,width=5.75cm]
		{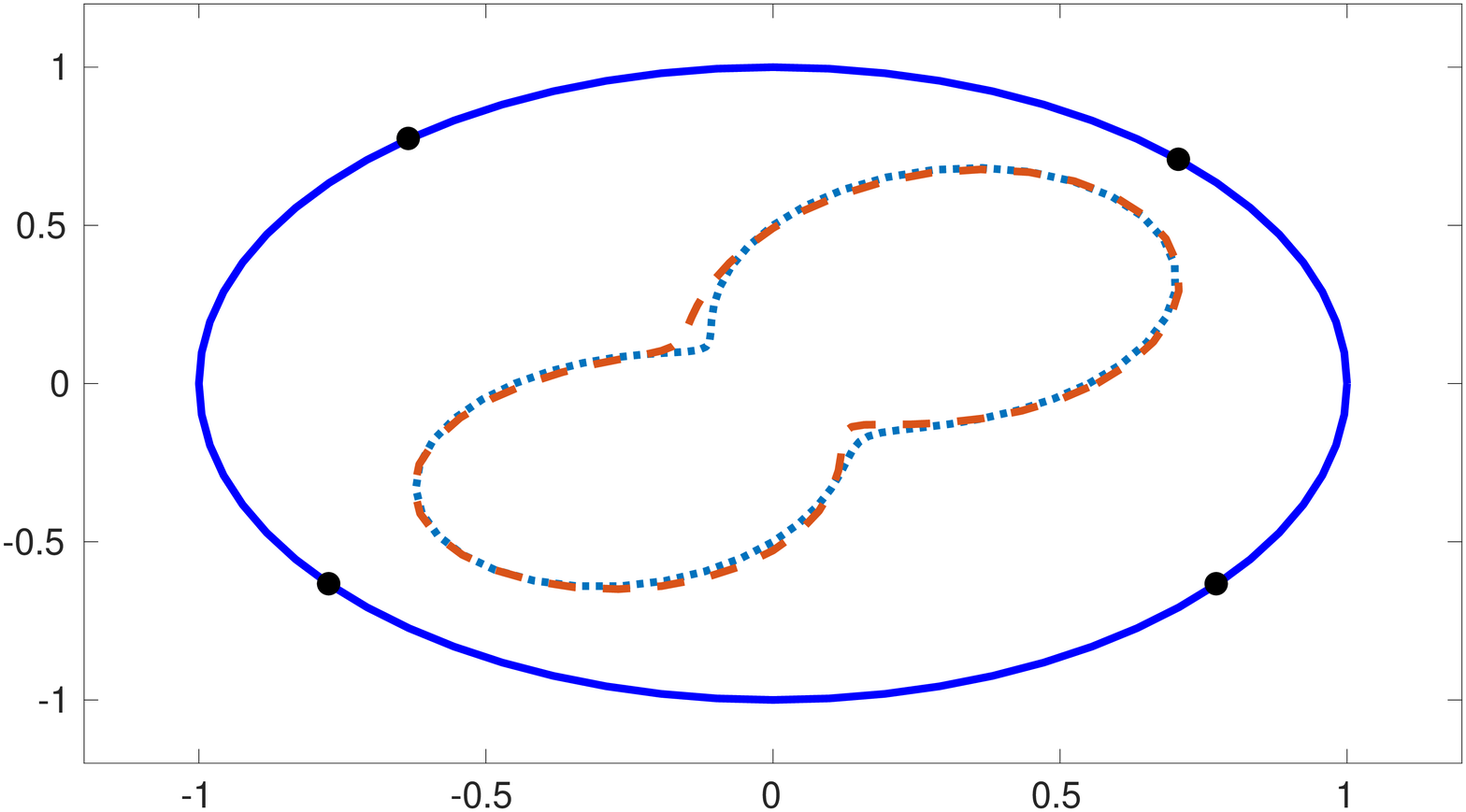}
\end{subfigure}
\vskip-15pt
\caption{\small Results of experiments $E_{2b}$ (left) and $E_{2c}$ (right) with $\alpha=0.9$.}
\label{e2_multiple_observations}
\end{figure}

The result is shown in Figure~\ref{e2_multiple_observations} and by comparison
with Figure~\ref{e_diff_alpha} it is a considerable improvement
over taking just two measurement points.

\section*{Acknowledgment}
Both authors were supported by the
National Science Foundation through award DMS-1620138.
The second author was also supported by
the Finnish Centre of Excellence in Inverse Problems Research 
through project 284715.

\bibliographystyle{plain}
\bibliography{fractional_discontinous_domain}

\begin{thebibliography}{10}

\bibitem{Caputo:1967}
Michele Caputo.
\newblock Linear models of dissipation whose {$Q$} is almost frequency
  independent -- {II}.
\newblock {\em Geophys. J. Int.}, 13(5):529--539, 1967.

\bibitem{ChengNakagawaYamamotoYamazaki:2009}
Jin Cheng, Junichi Nakagawa, Masahiro Yamamoto, and Tomohiro Yamazaki.
\newblock Uniqueness in an inverse problem for a one-dimensional fractional
  diffusion equation.
\newblock {\em Inverse Problems}, 25(11):115002, 16, 2009.

\bibitem{Djrbashian:1966}
Mkhitar~M Djrbashian.
\newblock {\em Integral {T}ransformations and {R}epresentation of {F}unctions
  in a {C}omplex {D}omain [in {R}ussian]}.
\newblock Nauka, Moscow, 1966.

\bibitem{Djrbashian:1993}
Mkhitar~M. Djrbashian.
\newblock {\em Harmonic {A}nalysis and {B}oundary {V}alue {P}roblems in the
  {C}omplex {D}omain}.
\newblock Birkh\"auser, Basel, 1993.

\bibitem{HettlichRundell:2001}
F.~Hettlich and W.~Rundell.
\newblock Identification of a discontinuous source in the heat equation.
\newblock {\em Inverse Problems}, 17(5):1465--1482, 2001.

\bibitem{HettlichRundell:1996}
Frank Hettlich and William Rundell.
\newblock Iterative methods for the reconstruction of an inverse potential
  problem.
\newblock {\em Inverse Problems}, 12(3):251--266, 1996.

\bibitem{JinLazarovZhou:2016}
Bangti Jin, Raytcho Lazarov, and Zhi Zhou.
\newblock An analysis of the {L}1 scheme for the subdiffusion equation with
  nonsmooth data.
\newblock {\em IMA J. Numer. Anal.}, 36(1):197--221, 2016.

\bibitem{JinRundell:2015}
Bangti Jin and William Rundell.
\newblock A tutorial on inverse problems for anomalous diffusion processes.
\newblock {\em Inverse Problems}, 31(3):035003, 40, 2015.

\bibitem{LiuRundellYamamoto:2016}
Yikan Liu, William Rundell, and Masahiro Yamamoto.
\newblock Strong maximum principle for fractional diffusion equations and an
  application to an inverse source problem.
\newblock {\em Fract. Calc. Appl. Anal.}, 19(4):888--906, 2016.

\bibitem{MainardiMuraPagnini:2010}
Francesco Mainardi, Antonio Mura, and Gianni Pagnini.
\newblock The {$M$}-{W}right function in time-fractional diffusion processes: a
  tutorial survey.
\newblock {\em Int. J. Differ. Equ.}, pages Art. ID 104505, 29, 2010.

\bibitem{SakamotoYamamoto:2011}
Kenichi Sakamoto and Masahiro Yamamoto.
\newblock Initial value/boundary value problems for fractional diffusion-wave
  equations and applications to some inverse problems.
\newblock {\em J. Math. Anal. Appl.}, 382(1):426--447, 2011.

\bibitem{SamkoKilbasMarichev:1993}
Stefan~G. Samko, Anatoly~A. Kilbas, and Oleg~I. Marichev.
\newblock {\em Fractional {I}ntegrals and {D}erivatives}.
\newblock Gordon and Breach Science Publishers, Yverdon, 1993.

\end{thebibliography}
	
\end{document}